\documentclass[12 pt, reqno]{amsart} 
\usepackage[utf8]{inputenc}
\usepackage{hyperref}
\usepackage{scalerel}
\usepackage{amsthm,amsmath,amsfonts,amssymb,mathrsfs}
\usepackage{mathtools} %for \mathrlap, to avoid powers taking space
\usepackage{subcaption} %double figure
\usepackage{bbm}
\usepackage{xcolor}
\usepackage{float}
\usepackage[margin=1in]{geometry}
\usepackage{multirow} % double line for tables

\usepackage{ulem} 

\numberwithin{equation}{section}

\newtheorem{theorem}{Theorem}[section]

\newtheorem{Def}{Definition}[section]
\newtheorem{corollary}[theorem]{Corollary}
\newtheorem{prop}[theorem]{Proposition}

\theoremstyle{definition}
\newtheorem{example}{Example}[section]

\newcommand{\R}{\mathbb{R}}
\newcommand{\Rplus}{\mathbb{R}_+}
\newcommand{\Rminus}{\mathbb{R}_{\scalebox{0.6}{$-$}}}

\newcommand{\parti}{\mathcal{S}}

\newcommand{\V}{\mathbb{V}}
\newcommand{\C}{\mathscr{C}}

\newcommand{\cop}{\mathcal{C}}

\newcommand{\tot}{\mathrm{tot}}

\newcommand{\I}{\mathcal{I}}

\def\mathTitle#1{
	\smallskip
	\noindent \textbf{#1}
	\smallskip
	\hfill\\
}

\def\probClass#1{\mathcal{P}(#1)}
\def\weightClass#1{\mathcal{W}\left(#1\right)}
\newcommand{\dirichDom}{{\V_{n-1}}}
\newcommand{\dirichDomTilde}{{\V_n}}

\newcommand{\Esp}{\mathbb{E}}
\def\Var{\mathop{\rm Var}\nolimits}

\newcommand{\br}{\,|\,}
\newcommand{\brr}{\,\bigr|\,}
\newcommand{\veps}{\varepsilon}

\newcommand{\dsp}{\displaystyle}
\newcommand{\dirich}{\Pi_{\alpha}}
\newcommand{\dirichTilde}{\Pi_{\Tilde{\alpha}}}
\newcommand{\dirichInv}{\Pi_{\alpha,\alpha_0}^{\mathrm{inv}}}
\newcommand{\uno}{\mathbf{1}}
\newcommand{\tn}{t_{-n}}

\newcommand{\tildx}{\Tilde{x}}

\newcommand{\mutild}{ \Tilde{\mu} }
\newcommand{\wtild}{{\Tilde{\Weight}}}
\newcommand{\rhotild}{ \Tilde{\rho} }
\newcommand{\muR}{\mu_{\mathrm{rad}}}
\newcommand{\nuR}{\nu_{\mathrm{rad}}}
\newcommand{\wR}{w_{\mathrm{rad}}}

\def\FX#1{F_{#1}}
\def\FtilX#1{\Tilde{F}_{#1}}

\newcommand{\matClass}{\mathcal{M}}
\newcommand{\matClassR}{\mathcal{M}_R}

\newcommand{\Weight}{{W}}
\newcommand{\DGSMk}{ \upsilon_{I_k,\Weight_{I_k}} }
\newcommand{\SKtot}{ S_{I_k}^\tot }

\def\norm#1{\left\| #1 \right\|}

\def\abs#1{\left| #1 \right|}
\def\nset#1#2{ \{#1,\dots,#2\} }
\def\mlap#1{  {\mathrlap{#1}} }

\DeclareMathOperator{\Hess}{Hess}
\DeclareMathOperator{\Jac}{Jac}

\DeclareMathOperator{\Lip}{Lip}

\DeclareMathOperator{\diag}{diag}
\newcommand{\Id}{\mathrm{Id}}

\newcommand{\normal}{\mathcal{N}}

\newcommand{\ie}{\textit{i.e.}\,}
\newcommand{\eg}{\textit{e.g.}\,}

\newcommand\blfootnote[1]{%
	\begingroup
	\renewcommand\thefootnote{}\footnote{#1}%
	\addtocounter{footnote}{-1}%
	\endgroup
}

\title[]{On weighted Poincar\'e inequalities for multivariate Liouville distributions - Application to Global Sensitivity Analysis}

\author{David Heredia}
\date{}

\begin{document}
	
	\maketitle
	
	\begin{abstract}
		In this work we establish weighted Poincaré inequalities for multivariate Liouville distributions, which are a generalization of the Dirichlet distribution. We also consider continuous elliptically contoured distributions, whose density levels are unions of hyperellipsoids. Our approach is based on a transport argument which allows weighted Poincar\'e inequalities to be transferred between probability measures. We apply our results to global sensitivity analysis and illustrate their practical use in a flood model case study, where the structure of dependence of the input variables is encoded by classical copulas.
	\end{abstract}
	
	\blfootnote{\textit{2020 Mathematics Subject Classification.} 
		26D10, % 	Inequalities involving derivatives and differential and integral operators
		39B62, % Functional inequalities, including subadditivity, convexity, etc.
		62H10,  	% Multivariate distribution of statistics
		62H05,  % Characterization and structure theory for multivariate probability distributions; copulas
		%49Q22, % Optimal transportation
		62P30. %Applications of statistics in engineering and industry; control charts
		
		\textit{Key words and phrases.} Poincar\'e type inequalities, Liouville distributions, Elliptical distributions, Copulas, Global Sensitivity Analysis, Sobol indices, Derivative-based Global Sensitivity Measures.}
	
	\section{Introduction}
	Poincar\'e inequalities for probability distributions constitute a central topic in probability and functional analysis, with deep connections to concentration of measure, isoperimetry and spectral theory. Recall that the principle of a Poincar\'e inequality is to control the variance of functions in a functional space, typically a Sobolev space, by means of the integral of the Euclidean norm of their gradient. This provides a natural tool for applications in settings where derivative information is available.
	
	For instance, Poincar\'e inequalities have applications in dimension reduction via ridge approximation \cite{active_subspaces,zahm2019,verdiere2025}, where they are used to control the error induced by the reduction procedure, and have become a classical tool in Global Sensitivity Analysis (GSA) \cite{SobolKucherenko2009,lamboni,poincareintervals}. In GSA, one aims at quantifying the influence of input parameters, supposed to be modelled as random variables, in the output of a computationally expensive black-box model. When the input variables are independent, one-dimensional Poincar\'e inequalities provide the link between two commonly used sensitivity indices to quantify uncertainty: Sobol indices, which are highly interpretable but expensive in terms of computational resources, and Derivative based Global Sensitivity Measures (DGSM), which rely on the gradient information. This connection allows one to use DGSM as efficient tools for screening purposes, in order to identify variables with negligible influence.
	
	More recently, attention has shifted towards  weighted Poincar\'e inequalities. These are similar to the classical ones, but the norm of the gradient in the right-hand side integral is replaced by a norm induced by a matrix-valued function. There are at least two main motivations for considering such weighted inequalities. First, some probability measures do not satisfy a classical Poincar\'e inequality, such as heavy-tailed distributions (see \cite{BGL}). The second motivation is accuracy: the introduction of weights provides an additional degree of freedom that can significantly improve the accuracy of numerical results. For instance, in the context of GSA, \cite{Song,HerediaWeightPoincare} propose weighted DGSM indices, which generalize the classical ones and arise naturally from the application of one-dimensional weighted Poincar\'e inequalities. Although these results are limited to the assumption of independent input variables, they can naturally be extended to the case of block-wise independent variables using multidimensional weighted Poincar\'e inequalities. A key challenge for such extension is that these multidimensional inequalities remain far less understood than their one-dimensional counterparts, and existing results with practical relevance are only available for a limited range of probability measures. 
	
	The aim of this work is therefore twofold: to establish new results on multidimensional weighted Poincar\'e inequalities and to show their practical application in GSA. Our main focus is on the class of multivariate Liouville distributions and we additionally consider elliptically contoured distributions. These two families of probability distributions can be seen as generalizations of the Dirichlet and multivariate normal distributions, respectively, and both play a central role in statistics and related fields (see, for instance, \cite{symmetric_multivariate_distributions,elliptical_distributions_in_statistics,robust_statistics} and \cite{multivariate_liouville_3,inequalities_book}).
	In addition, by adopting a copula-based perspective, we are able to extend this type of results to more general distributions with prescribed copulas, which are of significant practical interest as they frequently arise in statistical modeling. To obtain these results, our approach is mainly based on a transport argument which, roughly speaking, allows weighted Poincar\'e inequalities to be transferred between probability measures while preserving the optimal Poincar\'e constant. This approach is direct and comparatively simple in contrast to, for instance, other 
	classical techniques relying on the spectral interpretation of weighted Poincar\'e inequalities (see \cite{BGL}). It is also naturally suited to measures characterized through transport, such as the moment measures addressed in \cite{cui2024optimal}.
	
	Our contributions and the organization of the paper are summarized as follows. Section \ref{sec: preliminares} introduces the preliminaries. Section \ref{sec: multivariate Liouville} is devoted to our main results on multivariate Liouville distributions. Its central result, Theorem \ref{theorem: Liouville measures Poincare inequality}, establishes weighted Poincar\'e inequalities for this class of distributions. Our approach is based on a radial-type decomposition which reduces the analysis to a product measure. As an application of this result, and under additional log-concavity assumptions, we further obtain two general Poincar\'e inequalities. Finally, another inequality is established in the same log-concave setting, using a direct argument through the famous Brascamp-Lieb inequality. Examples of application are presented, including weighted Poincar\'e inequalities for a family of heavy-tailed distributions.
	\\
	In Section \ref{sec: Weighted Poincare inequalities for elliptically contoured distributions} we make a brief digression to elliptically contoured distributions. These distributions are related to their spherical counterparts through a linear transport map. Proposition \ref{prop: elliptical measures Poincare inequality} then allows to transfer existing results for spherically contoured distributions, such as those emphasized in \cite{joulin_bonnefont_spherically_symmetric}, to the elliptical case. Examples are provided to illustrate this approach.
	\\
	Section \ref{sec: Measures with prescribed copulas} is motivated by practical applications. More precisely, we aim at extending our results to more general probability distributions for which the structure of dependence is encoded by a given copula. In Proposition \ref{prop: copula Poincare inequality} we show how to transfer a weighted Poincar\'e inequality for a given probability measure to any other measure sharing the same copula. Combined with the results emphasized above, this entails inequalities for distributions with Clayton or elliptical copulas, the latter including the Gaussian case.
	For measures associated with Gaussian copulas, we additionally establish classical Poincar\'e inequalities under uniform log-concavity assumptions on the marginals. 
	\\
	Finally, Section \ref{section: Application to Global Sensitivity Analysis} is devoted to applications in GSA. We introduce Sobol indices and weighted DGSM in the context of models with block-wise independent input variables. Then Proposition \ref{prop: upper bound Sobol DGSM} provides upper bounds on Sobol indices in terms of DGSM, obtained by applying weighted Poincaré inequalities. We illustrate these bounds numerically through a flood model case study, whose input variables are block-wise independent and coupled according to Gaussian or Clayton copulas. In the Gaussian copula setting we compare our numerical results with those obtained using classical Poincar\'e inequalities in \cite{steiner}, reporting more accurate estimates with our approach.
	
	\section{Preliminaries}
	\label{sec: preliminares}
	Let $\Omega\subset \R^n$ be a connected open set with a piecewise $\C^1$ boundary. We denote  $\probClass{\Omega}$ the set of probability measures $\mu$ on $\Omega$ that admit a density with respect to the Lebesgue measure, which is positive in $\Omega$. Denote $\matClass^n$ the set of symmetric positive definite matrices and let $\weightClass{\Omega}$ be the set of matrix-valued functions $\Weight\colon \Omega\rightarrow \matClass^n$. We systematically refer to functions $\Weight\in \weightClass{\Omega}$ as weights. Let $\langle\,\cdot\, , \, \cdot \rangle$ be the Euclidean inner product, with associated norm $\norm{\,\cdot\,}$.
	For a matrix $A\in \matClass^n$, its induced norm is defined as $\norm{x}_A=\sqrt{\langle A x, x\rangle}$, for all $x\in \R^n$. We also write $\lambda_{\max}(A)$ for the largest eigenvalue of $A$ and $\lambda_{\min}(A)$ for the smallest eigenvalue.\\
	Let $L^2(\mu)$ be the space of square-integrable functions with respect to a probability measure $\mu\in \probClass{\Omega}$ and, given $f\in L^2(\mu)$, denote $\nabla f$ the gradient of weak partial derivatives of $f$. 
	We can now define a weighted Poincar\'e inequality.
	\begin{Def}
		Let $\Omega\subset \R^n$ be a connected open set
		with a piecewise $\C^1$ boundary and let $\mu \in \probClass{\Omega}$. We say that $\mu\in \probClass{\Omega}$ satisfies a weighted Poincar\'e inequality with weight $\Weight \in \weightClass{\Omega}$ and finite constant $C>0$ if 
		
		\begin{equation}
			\label{eq: Poincare inequality}
			\Var_\mu(f):=\int_\Omega f^2\,d\mu-\left(\int_\Omega f\,d\mu\right)^2
			\leq C \int_\Omega 
			\norm{\nabla f}^2_{\Weight} d\mu,
		\end{equation}
		for all $f\in L^2(\mu)$ for which the right-hand side of the inequality is finite.
	\end{Def}
	In the following, whenever a weighted Poincar\'e inequality is stated, it will be understood to hold for all such functions $f$, to avoid unnecessary repetition.
	
	We denote $C_P(\mu,\Weight)$ the optimal (the smallest) positive constant for which \eqref{eq: Poincare inequality} holds. In the particular case where $\Weight$ is the identity matrix $\Id$, inequality \eqref{eq: Poincare inequality} reduces to the classical Poincar\'e inequality, which is a fundamental and active research topic. See \eg \cite{BGL} for an accessible introduction to the subject. In this case we simply write $C_P(\mu,\Id)=C_P(\mu)$. For a general weight $W$, note that if $\lambda_{\max}(W)$, which depends on the space variable, is bounded in $\Omega$, then the weighted Poincar\'e inequality \eqref{eq: Poincare inequality} implies a classical one with optimal constant satisfying 
	\[C_P(\mu)\leq \sup_{x\in \Omega}\lambda_{\max}(W(x))\,C_P(\mu,\Weight).\]
	
	Throughout this section we establish several weighted Poincar\'e inequalities. Our approach mainly relies on a well-known transport argument, that	we recall for completeness. Given a probability measure $\nu\in \probClass{\Omega}$ and a mapping $T\colon \Omega\rightarrow T(\Omega)$, we denote $T\# \nu$ the image measure (also called the pushforward measure) of $\nu$ by $T$. Suppose that $\nu$ satisfies a weighted Poincar\'e inequality with some weight $W_\nu\in \weightClass{\Omega}$ and that $T$ is a diffeomorphism. Then the image measure $\mu=T\# \nu\in \probClass{T(\Omega)}$ satisfies the following weighted Poincar\'e inequality
	\begin{multline}
		\label{eq: weighted Poincare inequality by transport}
		\Var_{\mu}(f)
		=\int_{\Omega}(f\circ T)^2\,d\nu-\left(\int_{\Omega}f\circ T\,d\nu\right)^2 \\ \leq C_P(\nu,\Weight_\nu) \int_{\Omega} \norm{\Jac(T)^\top \nabla f}_{\Weight_\nu }^2\, d\nu
		= C_P(\nu,\Weight_\nu)\int_{T(\Omega)}\norm{\nabla f}^2_{\Weight_\mu}\,d\mu,
	\end{multline}
	where $\Weight_\mu$ is the weight defined by
	\[\Weight_\mu=(\Jac(T)\,\Weight_\nu\Jac(T)^\top)\circ T^{-1},\]
	with $\Jac(T)$ standing for the Jacobian matrix of $T$ and $\Jac(T)^\top$ its transpose. Moreover, the optimal constants 
	$C_P(\mu,W_\mu)$ and $C_P(\nu,W_\nu)$ coincide. Indeed, the inequality above already gives us $C_P(\mu,\Weight_\mu)\leq C_P(\nu,\Weight_\nu)$. The reverse inequality follows by exchanging the roles of $\mu$ and $\nu=T^{-1}_{\#} \mu$, and using the identity $\Jac(T^{-1})=\Jac(T)^{-1}\circ T^{-1}$.
	
	Beyond transport, additional tools are available in the log-concave setting. Assuming that $\Omega$ is convex, recall that a measure $\mu$
	is said to be strictly log-concave if its
	density can be expressed as $\rho=e^{-V}$, where the potential $V$ is a strictly convex function  in $\Omega$. In this context, a fundamental weighted Poincar\'e inequality is the
	Brascamp-Lieb inequality (see \cite{brascamp_lieb}), which states that
	\begin{equation}
		\label{eq: Brascamp-Lieb}
		\Var_\mu(f)
		\leq \int_{\Omega} \norm{\nabla f}_{\Hess(V)^{-1}}^2\,d\mu,
	\end{equation}
	where $\Hess(V)$ denotes the Hessian matrix of $V$.
	In particular, when $\inf_{x\in \Omega}\lambda_{\min}(\Hess(V)(x))>0$, meaning that the measure $\mu$ is uniformly log-concave, it yields the well-known estimate
	\begin{equation}
		\label{eq: Bakry-Emery}
		C_P(\mu)\leq  \frac{1}{\inf_{x\in \Omega}\lambda_{\min}(\Hess(V)(x))},
	\end{equation}
	which is also an instance of the Bakry-Emery curvature dimension criterion (see  \cite{BGL}). This bound was later refined by Veysseire in \cite{veysseire}. We present his result as it appears in \cite{ABJ_intertwining}:
	\begin{equation}
		\label{eq: veysseire}
		C_P(\mu)\leq \int_{\Omega}\frac{1}{\lambda_{\min}(\Hess(V))} \,d\mu.
	\end{equation}
	This estimate will be applied in our forthcoming analysis.
	
	\section{Main results: weighted Poincar\'e inequalities for multivariate Liouville distributions}
	\label{sec: multivariate Liouville}
	Multivariate Liouville distributions arise in many areas of probability and statistics, notably in multivariate majorization \cite{inequalities_book} and in statistical reliability theory \cite{multivariate_liouville_3}. They also constitute  generalizations of the Dirichlet distribution, which itself has numerous connections in theoretical and applied fields (see \eg \cite{dirichlet_book}). 
	
	We say that a probability measure $\mu\in \probClass{\Rplus^n}$ in the strictly positive orthant $\Rplus^n=(0,\infty)^n$ is a multivariate Liouville distribution
	if its density function takes the form
	\begin{equation}
		\label{eq: multivariate Liouville distribution density}
		\rho(x)=Z^{-1}
		\prod_{i=1}^{n} x_i^{\alpha_i-1} g\left(\sum_{i=1}^{n} x_i\right),\quad x\in \Rplus^n,
	\end{equation}
	where $g\colon \Rplus\rightarrow \Rplus$ is an univariate positive function and $\alpha=(\alpha_1,\dots,\alpha_n)$ is a vector of positive parameters. In the following we denote $\abs{\alpha}=\sum_{i=1}^n \alpha_i$. We will systematically use the same notation $Z^{-1}$  for the normalization constant of any probability measure, when it is not needed  explicitly.
	
	Below we present some relevant examples of multivariate Liouville distributions, including those appearing in \cite{symmetric_multivariate_distributions,inequalities_book}.
	\begin{enumerate}
		\item The Dirichlet distribution $\dirich\in \probClass{\dirichDom}$, defined on the open simplex
		\begin{align*}
			\dirichDom &=\left\{ t_{-n}\colon= (t_1,\dots,t_{n-1})\in (0,1)^{n-1} \brr \sum_{i=1}^{n-1} t_i < 1 \right\},
		\end{align*}
		is the probability measure with density function
		\begin{equation}
			\label{eq: Dirichlet distribution density}
			\rho(\tn)=Z^{-1}
			\prod_{i=1}^{n-1}t_i^{\alpha_i-1}\left(1-\sum_{i=1}^{n-1}t_i\right)^\mlap{\alpha_n-1},\qquad \tn\in \dirichDom.
		\end{equation}
		This density is a particular case of \eqref{eq: multivariate Liouville distribution density} obtained by restricting it to $n-1$ variables and taking the function $g(s)=(1-s)^{\alpha_n-1}\mathbbm{1}_{s<1}$. The term inside the parenthesis in \eqref{eq: Dirichlet distribution density} should be interpreted as an additional component $t_n=1-\sum_{i=1}^{n-1} t_i$, so that $\sum_{i=1}^n t_i=1$.  The Dirichlet distribution $\dirich$ is a natural multivariate extension the Beta distribution, recovered when $n=2$.
		\item Another multivariate Liouville distribution is the inverted Dirichlet distribution.
		It is obtained 
		introducing an additional parameter $\alpha_0>0$ and taking the function
		$g(s)=
		(1+s)^{-(\abs{\alpha}+\alpha_0)}$, $s\in \Rplus$.
		The density is therefore defined as
		\begin{equation}
			\label{eq: Multivariate Liouville example inverted Dirichlet distribution}
			\rho(x)=Z^{-1}
			\prod_{i=1}^{n}x_i^{\alpha_i-1}\left(1+\sum_{i=1}^{n}x_i\right)^\mlap{-(\abs{\alpha}+\alpha_0)},\qquad 
			x\in \Rplus^n.
		\end{equation}
		We denote this measure by $\dirichInv$. The link between the Dirichlet distribution $\dirich$ and the inverted one $\dirichInv$ is given via transport, which we specify later in Example \ref{example: inverted Dirichlet} to establish a weighted Poincar\'e inequality for $\dirichInv$.
		\item As a particular case of the previous example, consider the multivariate Pareto distribution, also called multivariate Lomax distribution (see \cite{multivariate_pareto}), whose density function is given by
		\begin{equation}
			\label{eq: multivariate pareto}
			\rho(x)= 
			\frac{\Gamma(\gamma+n)}{\Gamma(\gamma)} 
			\left(1+\sum_{i=1}^n x_i\right)^\mlap{-(\gamma+n)},\qquad x\in \Rplus^n,   
		\end{equation}
		with $\gamma >0$ a parameter.
		\item Finally, consider the Gamma Liouville distribution,
		obtained when taking $g(s)=s^{\delta}e^{-s} 
		$, with $\delta>-\abs{\alpha}$. In this case the density in \eqref{eq: multivariate Liouville distribution density} becomes
		\begin{equation}
			\label{eq: Multivariate Liouville example correlated Gammas}
			\rho(x)=Z^{-1}
			\left(\sum_{i=1}^n x_i\right)^\delta\prod_{i=1}^n
			x_i^{\alpha_i-1}
			e^{-x_i},\quad x\in \Rplus^n.
		\end{equation}
		When $\delta=0$ this measure reduces to a product of independent gamma distributions.
	\end{enumerate}
	
	Now we are in position to establish weighted Poincar\'e inequalities for multivariate Liouville distributions. Our strategy is inspired by the approach used in \cite{bobkov_spherically_symmetric,joulin_bonnefont_spherically_symmetric} 
	for spherically contoured distributions  (\ie probability measures whose densities depend on $\norm{x}$, $x\in \R^n$). This approach relies on the fact that any spherical distribution $\nu\in \probClass{\R^n}$ can be expressed as an image of the product measure between a one-dimensional probability distribution
	$\nuR\in \probClass{\Rplus}$, encoding the radial component, and the uniform distribution on the $n$-sphere. This decomposition allows one to deal with the radial and angular components independently, using their corresponding Poincar\'e inequalities.
	
	In the same spirit, we represent any multivariate Liouville distribution $\mu\in \mathcal{P}(\Rplus^n)$ as an image of the product measure $\mu_S\otimes \dirich$, where $\mu_S\in \probClass{\Rplus}$ is a one-dimensional probability measure and $\dirich$ is the Dirichlet distribution. As similarly shown in \cite{joulin_bonnefont_spherically_symmetric} for the spherical case, we prove that weighted Poincar\'e inequalities for $\mu_S$ induce corresponding inequalities for $\mu$. This yields the main result of the paper, stated in the theorem below.
	\begin{theorem}
		\label{theorem: Liouville measures Poincare inequality}
		Let $\mu\in \probClass{\Rplus^n}$ be a multivariate Liouville distribution, with density function as in \eqref{eq: multivariate Liouville distribution density}. Consider the one-dimensional probability measure $\mu_S\in \probClass{\Rplus}$ with density proportional to $s\in \Rplus\mapsto s^{\abs{\alpha}-1}\,g(s)$. Suppose that $\mu_S$ satisfies a weighted Poincar\'e inequality with weight $w_S\in \weightClass{\Rplus}$. Denote $s(x)=\sum_{i=1}^n x_i$, $x\in \Rplus^n$, and
		\begin{equation*}
			K=\int_{\Rplus}\frac{s^2}{w_S(s)}\,d\mu_S(s).
		\end{equation*}
		Then $\mu$ satisfies the following weighted Poincar\'e inequality
		\begin{equation}
			\label{eq: Liouville measures Poincare inequality theorem}
			\Var_\mu(f)
			\leq \max\left(C_P(\mu_S,w_S),\frac{K}{\abs{\alpha}}\right)
			\int_{\Rplus^n}
			\frac{w_S\left(s(x)\right)}{s(x)}\sum_{i=1}^{n} x_i\, \left(\frac{\partial f}{\partial x_i}(x)\right)^2\,
			d\mu(x).
		\end{equation}
	\end{theorem}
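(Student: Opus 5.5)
The plan is to write $\mu$ as the image of the product measure $\mu_S\otimes\dirich$ and to split the variance by conditioning on the angular variable rather than on the radial one. Let $T(s,\tn)=(st_1,\dots,st_{n-1},st_n)$ with $t_n=1-\sum_{i<n}t_i$. This is a diffeomorphism from $\Rplus\times\dirichDom$ onto $\Rplus^n$, with inverse $x\mapsto (s(x), x_{-n}/s(x))$. The change of variables has Jacobian $s^{n-1}$, and $\prod x_i^{\alpha_i-1}=s^{\abs{\alpha}-n}\prod t_i^{\alpha_i-1}$. Hence the pushforward density factorizes as $s^{\abs{\alpha}-1}g(s)\cdot\prod_i t_i^{\alpha_i-1}$, so that $\mu=T\#(\mu_S\otimes\dirich)$. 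For $f$ on $\Rplus^n$ we set $F=f\circ T$. Then
\[
\partial_s F=\sum_{i=1}^n t_i\,\partial_i f(st),\qquad \partial_{t_j}F=s\bigl(\partial_j f-\partial_n f\bigr)(st),\quad j<n.
\]

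We also need a weighted Poincar\'e inequality for the Dirichlet law. I would use the Wright--Fisher (Jacobi) inequality
\[
\Var_{\dirich}(h)\le \frac1{\abs{\alpha}}\int_{\dirichDom}\Bigl(\sum_{i=1}^n t_i v_i^2-\Bigl(\sum_{i=1}^n t_iv_i\Bigr)^2\Bigr)d\dirich,
\]
where $v_i=\partial_{t_i}h$ for $i<n$ and $v_n=0$. Equivalently, the weight is $\diag(\tn)-\tn\tn^\top$. The constant $1/\abs{\alpha}$ is the inverse spectral gap, attained by linear functions. The quadratic form $Q_t(v)=\sum t_iv_i^2-(\sum t_iv_i)^2$ is invariant under $v\mapsto v+c\uno$. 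Therefore, when applied to $\nabla_{\tn}F$, it equals $s^2Q_t(\nabla f(st))$ with the full gradient $\nabla f$ in all $n$ coordinates. I expect this lemma, whether it is cited or proved via the diffusion generator (or the Brascamp--Lieb-type argument), to be the main ingredient needing justification. The rest is bookkeeping.

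Next, decompose by conditioning on $t$:
\[
\Var_\mu(f)=\int \Var_{\mu_S}\bigl(F(\cdot,t)\bigr)\,d\dirich(t)+\Var_{\dirich}(h),\qquad h(t)=\int F(s,t)\,d\mu_S(s).
\]
For the first term, the hypothesis on $\mu_S$ gives the bound $C_P(\mu_S,w_S)\int w_S(s)\,B\,d(\mu_S\otimes\dirich)$, where $B=(\sum_i t_i\partial_if)^2$. For the second term, differentiating under the integral gives $\nabla_{\tn}h=\int s\,(\partial_jf-\partial_nf)(st)\,d\mu_S(s)$. Since $Q_t$ is a positive semidefinite quadratic form (by Cauchy--Schwarz with respect to the weights $t_i$), we can write $s\,u=(s/\sqrt{w_S})(\sqrt{w_S}\,u)$ and apply Cauchy--Schwarz in $L^2(\mu_S)$. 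This yields
\[
Q_t\Bigl(\int s\,\nabla f(st)\,d\mu_S\Bigr)\le \int\frac{s^2}{w_S}\,d\mu_S\;\int w_S(s)\,Q_t(\nabla f(st))\,d\mu_S=K\int w_S\,(A-B)\,d\mu_S,
\]
where $A=\sum_i t_i(\partial_if)^2$. Therefore $\Var_{\dirich}(h)\le \frac{K}{\abs{\alpha}}\int w_S(A-B)\,d(\mu_S\otimes\dirich)$.

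Adding the two bounds and using $B\ge0$ and $A-B\ge0$ gives
\[
\Var_\mu(f)\le \max\Bigl(C_P(\mu_S,w_S),\tfrac{K}{\abs{\alpha}}\Bigr)\int w_S(s)\,A\,d(\mu_S\otimes\dirich).
\]
Finally, $w_S(s)A=\frac{w_S(s)}{s}\sum_i st_i(\partial_if)^2$, which after transporting back by $T$ is exactly the integrand in \eqref{eq: Liouville measures Poincare inequality theorem}. Some routine care is needed to justify differentiation under the integral and the use of the two Poincar\'e inequalities for functions with finite right-hand side. This can be handled by first treating smooth compactly supported $f$ and then approximating.
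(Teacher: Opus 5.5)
Your proposal is correct and follows the paper's proof step by step. It uses the same map $T$ with $\mu=T\#(\mu_S\otimes\dirich)$, the same conditional-variance split, the weighted inequality for $\mu_S$ on each fibre, and Shimakura's Dirichlet inequality with weight $\diag(\tn)-\tn\tn^\top$ applied to $h$, followed by Cauchy--Schwarz against $s^2/w_S$ and the same recombination through the $\max$. Your observation that $Q_t$ is invariant under $v\mapsto v+c\uno$ is simply a cleaner way of stating the algebraic identity the paper checks by hand to pass from the differences $\partial_i f-\partial_n f$ to the full gradient.
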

	\begin{proof}
		We begin by introducing the radial-type decomposition of the multivariate Liouville distribution $\mu$. Consider the mapping $T\colon \Rplus\times \dirichDom\rightarrow \Rplus^n$ defined as
		\begin{equation}
			\label{eq: transformation multivariate Liouville}
			T(s,\tn)=\left(s\, t_1,\dots,s\, t_{n-1},s\left(1-\sum_{i=1}^{n-1}t_i\right)\right), \quad  (s,\tn)\in \Rplus\times \dirichDom.
		\end{equation}
		Then we claim that $\mu=T\# (\mu_S\otimes \dirich)$. Indeed, for all $(s,\tn)\in \Rplus\times \dirichDom$, the determinant of the Jacobian of $T$ is given by $\det\left(\Jac (T)(s,\tn)\right)=-s^{n-1}$ (see \eg \cite{dirichlet_book}) and thus for every measurable, non-negative or bounded function $f\colon \Rplus^n\rightarrow \R$ it follows that
		\begin{align*}
			\int_{\Rplus^n} f&\,d\mu=\int_{\dirichDom}\int_{\Rplus} (f\circ T)(s,\tn)\,d\mu_S(s)\,d\dirich(\tn)\\
			&=\int_{\dirichDom} \int_{\Rplus} (f\circ T)(s,\tn)\,
			Z^{-1}
			\, \prod_{i=1}^{n-1} \left(s\,t_i\right)^{\alpha_i-1} \left(s\left(1-\sum_{i=1}^{n-1}t_i\right)\right)^{\alpha_n-1}s^{n-1}g(s)\,ds \,d\tn\\
			&=\int_{\dirichDom} \int_{\Rplus} (f\circ T)(s,\tn)\,
			Z^{-1}
			\, \prod_{i=1}^{n-1} t_i^{\alpha_i-1}\left(1-\sum_{i=1}^{n-1}t_i\right)^{\alpha_n-1} s^{\abs{\alpha}-1}g(s)\,ds \,d\tn\\
			&=\int_{\dirichDom} \int_{\Rplus} f\circ T\, d\mu_S\,d \dirich.
		\end{align*}
		
		Before proceeding to the proof computations, let us  introduce the relevant weighted Poincar\'e inequality satisfied by the Dirichlet distribution $\dirich$, which is a key ingredient in the proof. Consider the weight ${\Weight_\tau}\in \weightClass{\dirichDom}$ defined as
		\[\Weight_\tau(\tn)=\diag(\tn)-\tn \,\tn^\top,\quad \tn\in \dirichDom,\]
		where $\diag(\tn)$ denotes the diagonal matrix with $\tn$ on its diagonal. Using Cauchy-Schwarz' inequality we observe that it is indeed positive definite for all $\tn\in \dirichDom$: for any nonzero vector $b\in \R^{n-1}$ we have
		\begin{multline*}
			\langle {\Weight_\tau (\tn)} b,b\rangle=\sum_{i=1}^{n-1} t_i b_i^2-\left(\sum_{i=1}^{n-1} t_i b_i\right)^2\\
			\geq \sum_{i=1}^{n-1} t_i b_i^2-\left(\sum_{i=1}^{n-1} t_i\right)\left( \sum_{i=1}^{n-1} t_i b_i^2\right)=\left(1-\sum_{i=1}^{n-1} t_i\right)\left( \sum_{i=1}^{n-1} t_i b_i^2\right)> 0.
		\end{multline*}\\
		Then the Dirichlet distribution $\dirich$ satisfies a weighted Poincar\'e inequality with weight $\Weight_\tau$, meaning that
		\begin{align}
			\label{eq: Poincare inequality Dirichlet simplex}
			\scaleobj{.96}{
				\dsp
				\int_{\dirichDom} f^2 \,d\dirich\leq \frac{1}{\abs{\alpha}} \int_{\dirichDom} \norm{\nabla f}^2_{{\Weight_\tau}}\, d\dirich 
				= \frac{1}{\abs{\alpha}} \int_{\dirichDom} \left(\sum_{i=1}^{n-1} t_i \left(\frac{\partial f}{\partial t_i}\right)^2-\left(\sum_{i=1}^{n-1} t_i \frac{\partial f}{\partial t_i}\right)^2\right)\,d\dirich(\tn).}
		\end{align}
		Above, the factor $1/\abs{\alpha}$ is the optimal Poincar\'e constant $C_P(\dirich,\Weight_\tau)$, a fact that can be explained from a spectral point of view. Indeed, recall that $C_P(\dirich,\Weight_\tau)$ is characterized as the inverse of the spectral gap, \ie the first positive eigenvalue, of a self-adjoint operator associated with the weighted Poincar\'e inequality (see \cite{BGL}). In \cite{shimakura}, such a spectral gap is shown to be equal to $\abs{\alpha}$, and therefore $C_P(\dirich,\Weight_\tau)=1/\abs{\alpha}$.
		See also \cite{miclo_dirichlet} for an elegant alternative proof based on a stochastic representation of the Dirichlet distribution $\dirich$ in terms of gamma distributions.
		
		We prove now the weighted Poincar\'e inequality \eqref{eq: Liouville measures Poincare inequality theorem}. We have
		\begin{multline}
			\label{eq: Theorem proof both integrals together}
			\Var_\mu(f)=\int_{\dirichDom} \int_{\Rplus} (f\circ T)^2 d\mu_S\, d\dirich - \left(\int_{\dirichDom} \int_{\Rplus} f\circ T\, d\mu_S\, d\dirich\right)^2\\
			=\int_{\dirichDom} \underbrace{\int_{\Rplus} (f\circ T)^2 d\mu_S-\left(\int_{\Rplus} f\circ T \,d\mu_S\right)^2}_{\Var_{\mu_S}(f\circ T)}\, d\dirich+
			\int_{\dirichDom} h^2 \,d\dirich-\left(\int_{\dirichDom} h \,d\dirich\right)^2,
		\end{multline}
		where the function $h\colon \dirichDom\rightarrow \R$ is given by
		\[h(\tn)=\int_{\Rplus} f\circ T(s,\tn) \,d\mu_S(s), \quad \tn\in \dirichDom.\]
		We deal with the variance in \eqref{eq: Theorem proof both integrals together}. 
		Since $\mu_S$ satisfies a weighted Poincar\'e inequality with weight $w_S\in \weightClass{\Rplus}$, we have
		\begin{equation}
			\label{eq: Poincare inequality first integral mu_S multi-dim}
			\scaleobj{.97}{
				\dsp
				\Var_{\mu_S}(f\circ T)
				\leq C_P(\mu_S,w_S) 
				\int_{\Rplus} w_S(s) \left(\sum_{i=1}^{n-1} t_i\, \left(\frac{\partial f}{\partial x_i}\circ T\right)+\left(1-\sum_{i=1}^{n-1} t_i \right)\left(\frac{\partial f}{\partial x_ n}\circ T\right)\right)^2\,d\mu_S(s).}
		\end{equation}
		Next, to deal with the remaining terms in \eqref{eq: Theorem proof both integrals together} we apply the weighted Poincar\'e inequality for the Dirichlet distribution $\dirich$ given in \eqref{eq: Poincare inequality Dirichlet simplex} to the function $h$. We obtain
		\[
		\int_{\dirichDom} h^2 \,d\dirich-\left(\int_{\dirichDom} h \,d\dirich\right)^2
		\leq \frac{1}{\abs{\alpha}} \int_{\dirichDom} \norm{\int_{\Rplus} \nabla_{\tn} (f\circ T)\,d\mu_{S}}^2_{\Weight_\tau} \, d\dirich,\]
		where 
		$$\nabla_{\tn}(f\circ T)=\left(s\left(\frac{\partial f}{\partial x_i} - \frac{\partial f}{\partial x_ n}\right)\circ T\right)_{i=1}^{n-1},$$ and the integral with respect to $\mu_S$ is understood as a vector of coordinate-wise integrals. Introducing the norm $\norm{\,\cdot\,}_{W_\tau}$ inside the integral with respect to the probability measure $\mu_S$ and using then Cauchy-Schwarz' inequality it follows that
		\begin{multline}
			\label{eq: Poincare inequality second integral Pi_n-1 multi-dim}
			\scaleobj{.93}{\dsp
				\int_{\dirichDom} h^2 \,d\dirich-\left(\int_{\dirichDom} h \,d\dirich\right)^2 \leq \frac{K}{\abs{\alpha}} \int_{\dirichDom}\int_{\Rplus} \frac{w_S(s)}{s^2} \norm{ \nabla_{\tn} (f\circ T)}^2_{\Weight_\tau}\,d\mu_S(s) \,d\dirich(\tn)}\\
			\scaleobj{.93}{\dsp
				=\frac{K}{\abs{\alpha}} \int_{\dirichDom}\int_{\Rplus}
				w_S(s)\left(\sum_{i=1}^{n-1} t_i \left(
				\frac{\partial f}{\partial x_i}-\frac{\partial f}{\partial x_n}
				\right)^2 \circ T
				-\left(\sum_{i=1}^{n-1} t_i \left(\frac{\partial f}{\partial x_i}-\frac{\partial f}{\partial x_n}\right)\circ T \right)^2 \right)
				d\mu_S(s) \,d\dirich(\tn)},
		\end{multline}
		where we recall that $K=\int_{\Rplus} (s^2/w_S(s))\,d\mu_S(s)$.
		One can check that the quantity inside the big parenthesis can be rewritten as
		\begin{multline*}
			\scaleobj{.93}{\dsp    \sum_{i=1}^{n-1} t_i \left(\frac{\partial f}{\partial x_i}-\frac{\partial f}{\partial x_ n}\right)^2\circ T-\left(\sum_{i=1}^{n-1} t_i \left(\frac{\partial f}{\partial x_i}-\frac{\partial f}{\partial x_ n}\right)\circ T \right)^2}\\
			\scaleobj{.93}{\dsp=\sum_{i=1}^{n-1} t_i\, \left(\frac{\partial f}{\partial x_i}\circ T\right)^2+\left(1-\sum_{i=1}^{n-1}t_i\right)\left(\frac{\partial f}{\partial x_ n}\circ T\right)^2-\left(\sum_{i=1}^{n-1}t_i\, \left(\frac{\partial f}{\partial x_i}\circ T\right)+\left(1-\sum_{i=1}^{n-1}t_i\right)
				\left(\frac{\partial f}{\partial x_n}\circ T\right)\right)^2.}
		\end{multline*}
		But note then that the third term on the right-hand side is exactly the same one appearing in the first inequality \eqref{eq: Poincare inequality first integral mu_S multi-dim}.  Therefore, denoting $t_n := 1 - \sum_{i=1}^{n-1} t_i$ and combining inequalities \eqref{eq: Poincare inequality first integral mu_S multi-dim} and \eqref{eq: Poincare inequality second integral Pi_n-1 multi-dim} into \eqref{eq: Theorem proof both integrals together}, we obtain
		\begin{multline*}
			\Var_\mu(f) \leq C_P(\mu_S,w_S) \int_{\dirichDom}\int_{\Rplus} w_S(s) \left( \sum_{i=1}^n t_i \left(\frac{\partial f}{\partial x_i}\circ T\right) \right)^2 d\mu_S(s) \,d\dirich(\tn) \\
			+ \frac{K}{\abs{\alpha}} \int_{\dirichDom}\int_{\Rplus} w_S(s) \left( \sum_{i=1}^n t_i \left(\frac{\partial f}{\partial x_i}\circ T\right)^2 - \left( \sum_{i=1}^n t_i \,\left(\frac{\partial f}{\partial x_i}\circ T\right) \right)^2 \right) d\mu_S(s) \,d\dirich(\tn) \\
			\leq \max\left( C_P(\mu_S,w_S), \frac{K}{\abs{\alpha}} \right) \int_{\dirichDom}\int_{\Rplus} w_S(s) \sum_{i=1}^n t_i \left(\frac{\partial f}{\partial x_i}\circ T\right)^2 d\mu_S(s) \,d\dirich(\tn).
		\end{multline*}
		Finally, we come back to the original variables by applying the inverse transformation
		\begin{equation}
			\label{eq: inverse transformation multivariate Liouville}
			(s,\tn)=T^{-1}(x)=\left(\sum_{i=1}^n x_i,\frac{x_1}{\sum_{i=1}^n x_n},\dots,\frac{x_{n-1}}{\sum_{i=1}^n x_i}\right),\qquad x\in \Rplus^n,
		\end{equation}
		so that $t_n=1 - \sum_{i=1}^{n-1} t_i=x_n/\sum_{i=1}^n x_i$. This yields the desired inequality \eqref{eq: Liouville measures Poincare inequality theorem}, completing the proof.
	\end{proof}
	Before presenting examples of application of Theorem \ref{theorem: Liouville measures Poincare inequality} we first discuss its consequences in the log-concave setting, in analogy to the developments in the spherical case in \cite{joulin_bonnefont_spherically_symmetric}. In the following Corollary we establish two general weighted Poincar\'e inequalities under the assumptions that the function $g$ in \eqref{eq: multivariate Liouville distribution density} is log-concave and that $\abs{\alpha}>1$. These assumptions are not very restrictive. For instance, they admit the standard choice $\alpha_i=1$ for each $i\in \nset{1}{n}$ and are satisfied by all the examples introduced at the beginning of the section, under suitable choices on their additional parameters.
	\begin{corollary}
		\label{corollary: Liouville measures Poincare inequality}
		Let $\mu$ a multivariate Liouville distribution, with density function given in  \eqref{eq: multivariate Liouville distribution density}. Suppose that $g$ is log-concave (that is, $g=e^{-V_g}$ with $V_g$ convex) and 
		$\abs{\alpha}>1$. Denote $s(x)=\sum_{i=1}^n x_i$, $x\in \Rplus^n$. Then $\mu$ satisfies the following weighted Poincar\'e inequalities:
		\begin{equation}
			\label{eq: Liouville measures Poincare inequality with s^2 corollary}
			\Var_\mu(f)
			\leq \frac{1}{\abs{\alpha}-1} \int_{\Rplus^n}
			s(x)\,
			\sum_{i=1}^{n} x_i\, \left(\frac{\partial f}{\partial{x_i}}(x)\right)^2
			d\mu(x),
		\end{equation}
		and
		\begin{equation}
			\label{eq: Liouville measures classical Poincare inequality corollary}
			\Var_\mu(f)
			\leq \frac{1}{\abs{\alpha}-1} \int_{\Rplus^n} s(x)^2\,d\mu\times \int_{\Rplus^n}
			\sum_{i=1}^{n} \frac{x_i}{s(x)} \left(\frac{\partial f}{\partial{x_i}}(x)\right)^2\,
			d\mu(x).
		\end{equation}
		In particular, the latter leads to a classical Poincaré inequality after using the bound $x_i/s(x)\leq 1$ for each $i\in \nset{1}{n}$.
	\end{corollary}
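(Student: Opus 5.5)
The plan is to apply Theorem \ref{theorem: Liouville measures Poincare inequality} twice, each time with a suitable weight $w_S$ for the one-dimensional radial measure $\mu_S$. The key input is that $\mu_S$ is strictly log-concave under the stated hypotheses. Its density is proportional to $s^{\abs{\alpha}-1}g(s)=e^{-V(s)}$ with
\[
V(s)=V_g(s)-(\abs{\alpha}-1)\log s,\qquad V''(s)=V_g''(s)+\frac{\abs{\alpha}-1}{s^2}\geq \frac{\abs{\alpha}-1}{s^2}>0 \quad\text{on } \Rplus,
\]
using convexity of $V_g$ and $\abs{\alpha}>1$. Since $\Rplus$ is convex, both the Brascamp--Lieb inequality \eqref{eq: Brascamp-Lieb} and Veysseire's bound \eqref{eq: veysseire} are available for $\mu_S$. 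If $V_g$ is not $\C^2$, I would either argue by approximation or read $V''$ in the distributional sense. I expect this regularity point to be the only technical obstacle, and a minor one.

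\textbf{First inequality.} Brascamp--Lieb gives $\Var_{\mu_S}(\varphi)\leq \int \varphi'^2/V''\,d\mu_S\leq \frac{1}{\abs{\alpha}-1}\int s^2\varphi'^2\,d\mu_S$. So $\mu_S$ satisfies a weighted Poincar\'e inequality with $w_S(s)=s^2$ and $C_P(\mu_S,w_S)\leq 1/(\abs{\alpha}-1)$. With this weight, $K=\int s^2/s^2\,d\mu_S=1$, hence $K/\abs{\alpha}=1/\abs{\alpha}<1/(\abs{\alpha}-1)$. The maximum in \eqref{eq: Liouville measures Poincare inequality theorem} is therefore at most $1/(\abs{\alpha}-1)$. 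Since $w_S(s(x))/s(x)=s(x)$, inequality \eqref{eq: Liouville measures Poincare inequality with s^2 corollary} follows directly.

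\textbf{Second inequality.} Take $w_S\equiv 1$. Veysseire's estimate \eqref{eq: veysseire} gives
\[
C_P(\mu_S)\leq \int \frac{1}{V''}\,d\mu_S\leq \frac{1}{\abs{\alpha}-1}\int s^2\,d\mu_S .
\]
Here $K=\int s^2\,d\mu_S$. Because $\mu=T\#(\mu_S\otimes\dirich)$ and $s(T(s,\tn))=s$, the law of $s(x)$ under $\mu$ is $\mu_S$, so $K=\int s(x)^2\,d\mu(x)$. Then $\max(C_P(\mu_S),K/\abs{\alpha})\leq K/(\abs{\alpha}-1)$. The weight $w_S(s(x))/s(x)=1/s(x)$ yields \eqref{eq: Liouville measures classical Poincare inequality corollary}. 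If $K=\infty$, the statement is trivial.

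The final remark is immediate: bounding $x_i/s(x)\leq 1$ turns the right-hand side into $\frac{K}{\abs{\alpha}-1}\int\norm{\nabla f}^2\,d\mu$, which is a classical Poincar\'e inequality.
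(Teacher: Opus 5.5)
Your proposal is correct and follows the paper's proof essentially step for step. You use the same potential bound $1/V_S''\leq s^2/(\abs{\alpha}-1)$, then apply Brascamp--Lieb with $w_S(s)=s^2$ (so $K=1$) and Veysseire with $w_S\equiv 1$ (so $K=\int s^2\,d\mu_S=\int s(x)^2\,d\mu$), each time feeding the result into Theorem~\ref{theorem: Liouville measures Poincare inequality}. Your remarks on regularity and on the case $K=\infty$ are harmless additions.
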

	\begin{proof}
		The idea of the proof is to establish two different Poincar\'e inequalities for the one-dimensional probability measure $\mu_S$, taking advantage of the polynomial factor $s^{\abs{\alpha}-1}$ in its density which contributes to the log-concavity of the measure. Indeed, writing the density of $\mu_S$ proportional to $e^{-V_S}$, where the potential $V_S$ is given by
		\[V_S(s)=V_g(s)-(\abs{\alpha}-1)\log(s),\quad s\in \Rplus,\]
		then one observes that $V_S$ is convex since $V_g$ is, and moreover,
		\begin{equation*}
			\frac{1}{V_S''(s)}=\frac{1}{V_g''(s)+(\abs{\alpha}-1)\dfrac{1}{s^2}} \leq \frac{1}{\abs{\alpha}-1}s^2,\quad s\in \Rplus.
		\end{equation*}
		One can then apply the Brascamp-Lieb inequality \eqref{eq: Brascamp-Lieb}
		to the measure $\mu_S$, obtaining
		a weighted Poincar\'e inequality with weight $w_S(s)=s^2$ and optimal constant satisfying the inequality
		\[C_P(\mu_S,w_s)\leq \frac{1}{\abs{\alpha}-1}.\]
		With this choice of weight we have
		\[K=\int_{\Rplus}\frac{s^2}{w_S(s)}\,d\mu_S(s)=1\quad\mbox{and}\quad \max\left(C_P(\mu_S,w_S),\frac{K}{\abs{\alpha}}\right)\leq \frac{1}{\abs{\alpha}-1}.\]
		Therefore Theorem \ref{theorem: Liouville measures Poincare inequality} yields the first weighted Poincar\'e inequality of Corollary \ref{corollary: Liouville measures Poincare inequality}, given in \eqref{eq: Liouville measures Poincare inequality with s^2 corollary}. \\
		The second inequality \eqref{eq: Liouville measures classical Poincare inequality corollary} is obtained in a similar way, using Veysseire's inequality \eqref{eq: veysseire} instead of Brascamp-Lieb's, from which it follows that $\mu_S$ satisfies a classical Poincar\'e inequality (\ie with  weight $w_S(s)=1$) with optimal constant satisfying $$C_P(\mu_S)\leq \frac{1}{\abs{\alpha}-1}\int_{\Rplus}s^2\,d\mu_S(s).$$
		Then we have
		\[K=\int_{\Rplus} s^2 d\mu_S(s) \quad \mbox{and}\quad \max\left(C_P(\mu_S),\frac{K}{\abs{\alpha}}\right)\leq \frac{1}{\abs{\alpha}-1}\int_{\Rplus}s^2\,d\mu_S.\]
		Using the inverse transformation $T^{-1}$ in \eqref{eq: inverse transformation multivariate Liouville} we see that that the right-hand side integral is equal to the constant $\int_{\Rplus^n}s(x)^2\,d\mu(x)$ appearing in inequality \eqref{eq: Liouville measures classical Poincare inequality corollary}. This completes the proof.
	\end{proof}
	The two inequalities \eqref{eq: Liouville measures Poincare inequality with s^2 corollary} and \eqref{eq: Liouville measures classical Poincare inequality corollary} are not comparable. This can be seen, for instance, considering the function $f(x)=\sum_{i=1}^n x_i^{a}$ (with $a>0$). After some computations involving the transformation $T$ in \eqref{eq: transformation multivariate Liouville} one can show that, for this function, comparing the right-hand sides of the two inequalities reduces to the comparison between the terms
	\[\int_{\Rplus}s^{2a} \,d\mu_S(s)\quad \mbox{and}\quad  \int_{\Rplus}s^{2}d\mu_S(s)\times\int_{\Rplus}s^{2a-2} \,d\mu(s).\]
	Choosing $a$ and $\mu_S$ appropriately, one can find cases where either the first or the second quantity is smaller. 
	
	These two inequalities rely on the log-concavity of the function $g$. A natural question is whether this property implies the log-concavity of the measure $\mu$ itself, allowing for a direct general result via the Brascamp-Lieb inequality \eqref{eq: Brascamp-Lieb}. This is indeed the case, although it requires the assumption $\alpha_i>1$ for each $i\in \nset{1}{n}$, which is a stronger condition than  $\vert \alpha \vert > 1$, used in Corollary \ref{corollary: Liouville measures Poincare inequality}. Under this assumption, one obtains an additional weighted Poincar\'e inequality stated in the following proposition. 
	\begin{prop}
		Under the same notation and assumptions of Corollary \ref{corollary: Liouville measures Poincare inequality}, assume furthermore that $\alpha_i>1$ for each $i\in \nset{1}{n}$. Then the measure $\mu$ satisfies the following Poincar\'e inequality:
		\begin{equation}
			\label{eq: Liouville measures new Poincare inequality proposition}
			\Var_\mu(f)\leq \int_{\Rplus^n} \sum_{i=1}^n \frac{1}{\alpha_i-1}x_i^2\left(\frac{\partial f}{\partial x_i}(x)\right)^2 \,d\mu(x).
		\end{equation}
	\end{prop}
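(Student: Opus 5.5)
The plan is to apply the Brascamp--Lieb inequality \eqref{eq: Brascamp-Lieb} directly to $\mu$ on the convex domain $\Rplus^n$. Writing $\rho=Z^{-1}e^{-V}$, the potential is
\[
V(x)=V_g\big(s(x)\big)-\sum_{i=1}^n(\alpha_i-1)\log x_i .
\]
Its Hessian is
\[
\Hess(V)(x)=V_g''\big(s(x)\big)\,\uno\uno^\top+D(x),\qquad D(x)=\diag\left(\frac{\alpha_1-1}{x_1^2},\dots,\frac{\alpha_n-1}{x_n^2}\right),
\]
where $\uno$ is the all-ones vector.

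Since $V_g$ is convex, $V_g''\ge 0$, so the rank-one term is positive semidefinite. Because $\alpha_i>1$ for all $i$, $D(x)$ is positive definite. Hence $\Hess(V)(x)\succeq D(x)\succ 0$, so $V$ is strictly convex and $\mu$ is strictly log-concave, which makes Brascamp--Lieb applicable. Matrix inversion is operator monotone decreasing on $\matClass^n$, so
\[
\Hess(V)^{-1}\preceq D^{-1}=\diag\left(\frac{x_i^2}{\alpha_i-1}\right).
\]
Therefore
\[
\norm{\nabla f}^2_{\Hess(V)^{-1}}\le \sum_{i=1}^n \frac{x_i^2}{\alpha_i-1}\left(\frac{\partial f}{\partial x_i}\right)^2,
\]
and integrating against $\mu$ gives \eqref{eq: Liouville measures new Poincare inequality proposition}. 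One could also invert $V_g''\uno\uno^\top+D$ explicitly with Sherman--Morrison, obtaining $D^{-1}-\frac{V_g''\,D^{-1}\uno\uno^\top D^{-1}}{1+V_g''\,\uno^\top D^{-1}\uno}$, which shows the subtracted term is nonnegative. The monotonicity argument suffices.

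The main obstacle is regularity: Brascamp--Lieb needs $V$ to be $\C^2$, while $g$ is only assumed log-concave. I would first prove the result for smooth $V_g$, either assuming it or mollifying. Convexity of $V_g$ is preserved under convolution with a mollifier, and the bound above does not depend on $V_g$ at all, so it survives passage to the limit, e.g.\ by weak convergence of the measures for smooth compactly supported $f$ followed by density. If $g$ vanishes outside an interval, as for the Dirichlet case, the domain $\{x: s(x)<R\}\cap\Rplus^n$ is still convex and the same argument applies there.
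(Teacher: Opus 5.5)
Your proof is correct and follows the paper's argument exactly: write $\Hess(V)=\diag\bigl((\alpha_i-1)/x_i^2\bigr)+V_g''(s(x))\,\uno\uno^\top$, discard the positive semidefinite rank-one term, and apply the Brascamp--Lieb inequality \eqref{eq: Brascamp-Lieb}. Your appeal to the operator monotonicity of matrix inversion and your mollification remark for non-smooth $V_g$ make explicit two steps that the paper leaves implicit.
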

	\begin{proof}
		Under the additional assumption, the measure $\mu$ is strictly log-concave. Indeed, writing its density as $e^{-V}$, the potential $V$ is given by
		\[
		V(x) = \log(Z)
		- \sum_{i=1}^n (\alpha_i-1) \log (x_i) + V_g\left(\sum_{i=1}^n x_i\right),\quad x\in \Rplus^n,\]
		with Hessian matrix
		\[\Hess(V)(x)=\diag\left((\alpha_1-1)\frac{1}{x_1^2},\dots,(\alpha_n-1)  \frac{1}{x_n^2}\right)+V_g''\left(\sum_{i=1}^n x_i\right)\uno\uno^\top,\quad x\in \Rplus^n,\]
		where $\uno\in \R^n$ denotes the column vector with all entries equal to one. Since $V_g$ is convex, the second term is semi-positive definite, and hence the Hessian can be bounded from below (in the sense of symmetric matrices) by the first term alone. Therefore the Brascamp-Lieb inequality \eqref{eq: Brascamp-Lieb} entails \eqref{eq: Liouville measures new Poincare inequality proposition}.
	\end{proof}
	Inequality \eqref{eq: Liouville measures new Poincare inequality proposition} is not comparable to those in Corollary \ref{corollary: Liouville measures Poincare inequality}. For instance, in the first one given in \eqref{eq: Liouville measures Poincare inequality with s^2 corollary} the terms $s(x)\, x_i$ are larger than the $x_i^2$ terms in \eqref{eq: Liouville measures new Poincare inequality proposition}. However, the constant $1/(\abs{\alpha}-1)$ 
	in \eqref{eq: Liouville measures Poincare inequality with s^2 corollary} 
	is smaller than all the constants $1/(\alpha_i-1)$ in \eqref{eq: Liouville measures new Poincare inequality proposition}.
	
	As announced, we present examples of weighted Poincar\'e inequalities for multivariate Liouville distributions. Specifically, we apply Theorem \ref{theorem: Liouville measures Poincare inequality} to obtain inequalities for the Gamma Liouville and the inverted Dirichlet distributions, both introduced at the beginning of this subsection. 
	In addition, we establish another weighted Poincar\'e inequality for the inverted Dirichlet distribution, obtained directly through a direct transport argument involving the Dirichlet distribution.
	\begin{example}
		Consider the Gamma Liouville distribution $\mu$ with density given in \eqref{eq: Multivariate Liouville example correlated Gammas}. Note that the application of Theorem \ref{theorem: Liouville measures Poincare inequality} reduces to find an available weight $w_S\in \weightClass{\Rplus}$ for the associated one-dimensional probability measure $\mu_S$ appearing in the theorem. In the present setting this measure is Gamma distributed, with density 
		\[s \in \Rplus\mapsto \frac{1}{\Gamma\left(\abs{\alpha}+\delta\right)}s^{\delta+\abs{\alpha}-1}e^{-s}.\]
		Then a natural choice of weight is the Stein kernel of $\mu_S$, given by $w_s(s)=s$, and for which the optimal Poincar\'e constant is $C_P(\mu_S,w_S)=1$ (see \eg \cite{stein_kernels_ley}). For this weight the constant in inequality \eqref{eq: Liouville measures Poincare inequality theorem} becomes
		\[\max\left(C_P(\mu_S,w_S),\frac{K}{\abs{\alpha}}\right)=\max\left(1,\frac{1}{\abs{\alpha}}\int_{\Rplus}s\,d\mu_S(s)\right)
		=\max\left(1,\frac{1}{\abs{\alpha}}(\abs{\alpha}+\delta)\right)=1+\frac{\delta}{\abs{\alpha}}.\]
		Consequently, Theorem \ref{theorem: Liouville measures Poincare inequality} entails the following weighted Poincar\'e inequality for $\mu$:
		\begin{equation*}
			\Var_\mu(f)
			\leq \left(1+\frac{\delta}{\abs{\alpha}}\right)
			\int_{\Rplus^n}
			\sum_{i=1}^{n} x_i\, \left(\frac{\partial f}{\partial x_i}(x)\right)^2\,
			d\mu.
		\end{equation*}
		This inequality is optimal when $\delta=0$, that is, when $\mu$ is a product of independent gamma distributions. In this case it coincides with the well-known weighted Poincar\'e inequality obtained by tensorizing one-dimensional Poincar\'e inequalities for gamma distributions, using their Stein kernels as weights (see \eg \cite{miclo_dirichlet}).
		\\
		Other potential choices of weight include the constant weight $w_s(s)=1$ or the quadratic one $w_s(s)=s^2$.
	\end{example}
	
	\begin{example}
		\label{example: inverted Dirichlet}
		Let us turn our attention to the inverted Dirichlet distribution $\dirichInv$ defined in \eqref{eq: Multivariate Liouville example inverted Dirichlet distribution}. It is a heavy-tailed distribution, meaning that it does not admit exponential moments. Hence it does not satisfy a classical Poincar\'e inequality (see \cite{BGL}). However it satisfies weighted Poincar\'e inequalities, as we present now. 
		
		The associated one-dimensional probability measure $\mu_S$ is a Beta prime distribution, with density proportional to
		\[s\in \Rplus \mapsto 
		\,s^{\abs{\alpha}-1}\,(1+s)^{-(\abs{\alpha}+\alpha_0)}.\]
		Using the transport argument \eqref{eq: weighted Poincare inequality by transport} we show that this measure satisfies a weighted Poincar\'e inequality with weight $w_S(s)=s^2$. Indeed, one can easily prove that $\mu_S=T_\# \Pi_\beta$, where $\Pi_{\beta}$ is the beta distribution with vector parameter $(\beta_1,\beta_2)=(\abs{\alpha},\alpha_0)$ and $T$ is the mapping given by $T(t)=t/(1-t)$ for all $t\in (0,1)$.  As proved in the Appendix, the measure $\Pi_\beta$ satisfies a weighted Poincar\'e inequality with weight $w_\tau(t)=t^2(1-t)^2$. The optimal constant $C_P(\Pi_\beta,w_\tau)$ is bounded from below by $4\max\left(\frac{1}{\beta_1^2},\frac{1}{\beta_2^2} \right)$ and bounded from above by the expression
		\begin{equation*}
			\Phi(\beta_1, \beta_2) = \begin{cases} 
				4\max \left( \dfrac{1}{\beta_1^2}, \dfrac{1}{\beta_2^2} \right), & \text{if } \min(\beta_1, \beta_2) \leq 1, \\[12pt]
				4\max \left( \dfrac{1}{2\beta_1 - 1}, \dfrac{1}{2\beta_2 - 1} \right), & \text{if } \min(\beta_1, \beta_2) \geq 1.
			\end{cases}
		\end{equation*}
		Note that in the regime $\min(\beta_1,\beta_2)\leq 1$ the bounds are sharp, leading to the identity  $C_P(\Pi_\beta,w_\tau)=4\max\left(\frac{1}{\beta_1^2},\frac{1}{\beta_2^2}\right)$. The upper bound is also known to be tight in the symmetric case $\beta_1=\beta_2=\kappa\geq 1$, where we have 
		$C_P(\Pi_\beta,w_\tau)=4/(2\kappa-1)
		$ (see \cite{HerediaWeightPoincare}). Finally, since for all $t\in (0,1)$ we have
		\[w_\tau(t)(T'(t))^2=t^2(1-t)^2 \frac{1}{(1-t)^4}=\left(\frac{t}{1-t}\right)^2=(T(t))^2,\]
		then the transport argument emphasized in \eqref{eq: weighted Poincare inequality by transport} entails that $\mu_S$ satisfies a weighted Poincar\'e inequality with weight $w_S(s)=\left(w_\tau\, (T')^2\right)\circ T^{-1}(s)=s^2$. Moreover, since the optimal Poincar\'e constant is invariant under transport,
		we have that $C_P(\mu_S,w_S)\leq \Phi(\abs{\alpha},\alpha_0)$. 
		
		We can now return to the multivariate Dirichlet distribution $\dirichInv$.
		After applying Theorem \ref{theorem: Liouville measures Poincare inequality} with the weight $w_S$ we obtain the weighted Poincar\'e inequality below
		\begin{equation}
			\label{eq: Poincare inequality inverted Dirichlet not used}
			\Var_{\dirichInv }(f)
			\leq \max\left(\frac{1}{\abs{\alpha}},\Phi(\abs{\alpha},\alpha_0)\right)\,
			\int_{\Rplus^{n}}s(x) 
			\sum_{i=1}^{n}\,x_i \left(\frac{\partial f}{\partial x_i}(x)\right)^2\,
			d\dirichInv.
		\end{equation}
		
		We propose another weighted Poincar\'e inequality for $\dirichInv$ obtained directly using transport. Consider the Dirichlet distribution $\dirichTilde$ in $\dirichDomTilde$ with vector parameter $\Tilde{\alpha}=(\alpha_1,\dots,\alpha_n,\alpha_0)$. Then we have $\dirichInv=T_\# \dirichTilde$, where $T$ is the mapping defined as
		\[T(t)=\frac{t}{1-\sum_{i=1}^{n} t_{i}},\quad t=(t_1,\dots,t_n)\in \dirichDomTilde,\]
		(see \cite{dirichlet_book}). Recall that $\uno\in \R^n$ denotes the column vector with all entries equal to one. The inverse mapping and the Jacobian of $T$ are explicitly given by
		\[T^{-1}(x)=\frac{x}{1+s(x)},\quad x\in\Rplus^{n},\]
		and
		\[\Jac(T)(t)=\frac{1}{1-\sum_{i=1}^{n} t_{i}}\left(\Id+\,\frac{1}{1-\sum_{i=1}^{n} t_{i}}t\, \uno^\top\right),\quad t\in \dirichDomTilde,\]
		respectively. 
		\\
		Recall that $\dirichTilde$ satisfies a weighted Poincar\'e inequality with weight
		\[\Weight_\tau(t)=\diag(t)-t\, t^\top,\quad t\in \dirichDomTilde,\]
		as claimed in \eqref{eq: Poincare inequality Dirichlet simplex}. Observe that for any $x\in \Rplus^n$ we have
		\[
		\Jac(T)\circ\, T^{-1}(x)=(s(x)+1)\left(\Id+x\,\uno^\top\right),\quad  W_\tau\circ\,T^{-1}(x)=\frac{1}{1+s(x)}\diag(x)-\frac{1}{(1+s(x))^2}x \,x^\top.\]
		Applying then the transport argument in \eqref{eq: weighted Poincare inequality by transport}, it follows that $\dirichInv$ satisfies a Poincar\'e inequality with weight $W$ defined for all $x\in \Rplus^n$ as
		\begin{align*}
			W(x)=\left(\Jac(T)\Weight_\tau \Jac(T)^\top\right)\circ T^{-1}(x)&=(\Id+x \uno^\top)\left( (1+s(x))\diag(x)-x x^{\top} \right)\left(\Id+\uno x^\top\right),
		\end{align*}
		and optimal constant $C_P(\dirichInv,\Weight)=1/(\abs{\alpha}+\alpha_0)$.
		After some algebraic simplifications involving the identities $ \diag(x)\,\uno =x$ and $\uno^\top x=s(x)$ we are able to reduce this weight to
		\begin{align*}
			W(x)=(1+s(x))\left(\diag(x)+x\,x^\top\right).
		\end{align*}
		In other words, the measure $\dirichInv$ satisfies the weighted Poincar\'e inequality
		\begin{equation}
			\label{eq: Poincare inequality inverted Dirichlet}
			\Var_{\dirichInv }(f)
			\leq \frac{1}{\abs{\alpha}+\alpha_0}\,
			\int_{\Rplus^{n}} (1+s(x)) \left(\sum_{i=1}^n x_i \left(\frac{\partial f}{\partial x_i}(x)\right)^2+\left(\sum_{i=1}^n x_i \frac{\partial f}{\partial x_i}(x)\right)^2\right)\,
			d\dirichInv.
		\end{equation}
		This inequality will later be combined with a copula-based argument to establish weighted Poincar\'e inequalities for distributions whose structure of dependence is encoded by the Clayton copula. Such a methodology which will be applied in global sensitivity analysis in Section \ref{section: Application to Global Sensitivity Analysis}. Although inequality \eqref{eq: Poincare inequality inverted Dirichlet not used} could also be employed for this purpose, we instead rely on the one with a known optimal constant, as this is crucial to ensure accuracy in the numerical experiments.
		
	\end{example}
	\section{Digression: weighted Poincar\'e inequalities for elliptically contoured distributions}
	\label{sec: Weighted Poincare inequalities for elliptically contoured distributions}
	Although elliptical contoured distributions are not a central focus of the present paper, they are considered in this section due to their importance in multivariate statistics and its applications. For instance, they play a central role in financial mathematics \cite{elliptical_distributions_in_statistics}, robust statistics \cite{symmetric_multivariate_distributions,robust_statistics} and high-dimensional covariance matrix estimation
	\cite{high-dimensional_matrix_estimation}. They also provide a natural framework for modeling multivariate dependence in terms of copulas, as discussed in the next section. An overview on these distributions can be found, for instance, in \cite{symmetric_multivariate_distributions}. 
	
	To preserve our formalism based on continuous probability measures, we consider elliptical contoured distributions having a density. However, the results presented in this part remain valid for more general elliptical distributions, defined in terms of stochastic representations (see \cite{symmetric_multivariate_distributions}). 
	
	We say that a probability measure $\mu\in \probClass{\R^n}$ is elliptically contoured (or simply an elliptical distribution) if its density function is of the form
	\begin{align*}
		\rho(x)= 
		Z^{-1}
		g\left( \norm{x-m}_{\Sigma^{-1}}\right),\quad x\in \R^n,
	\end{align*}
	where the vector $m\in \R^n$ and the positive definite matrix $\Sigma\in \matClass^n$ are fixed parameters and $g\colon \Rplus\rightarrow \R$ is an univariate positive function.
	For simplicity we assume $m=0$ and, following standard convention, we suppose without loss of generality that $\Sigma$ belongs to the subclass of matrices
	\begin{equation}
		\label{eq: matClassR matrices renormalized}
		\matClassR^n=\left\{ \Sigma\in \matClass^n \br \Sigma_{i,i}=1,\, \Sigma_{i,j}\in (-1,1), \mbox{for }i\neq j,\, i,j\in \nset{1}{n} \right\}.
	\end{equation}
	The density $\rho$ offers a clear geometric characterization of these measures, as its level sets consist of unions of hyperellipsoids. In the particular case $\Sigma=\Id$, these level sets are unions of spheres, and then we say that $\mu$ is a spherical contoured distribution (or simply a spherical distribution). To emphasize the difference between elliptical and spherical distributions, we denote them by $\mu_\Sigma$ and $\mu_{\Id}$, respectively.
	
	Recall some important examples of both elliptical and spherical distributions. 
	\begin{enumerate}
		\item A fundamental elliptical distribution is the multivariate normal $\mu\sim \normal(0,\Sigma)$, obtained after taking the function $g(r) =
		\exp\left(-r^2/2\right)$, $r\in \Rplus$. Recall that its density function is given by
		\[\rho(x)=(2\pi)^{-\frac{n}{2}} \det(\Sigma)^{-\frac{1}{2}} e^{-\frac{1}{2} \norm{x}_{\Sigma^{-1}}^2},\quad  x\in \R^n.\]
		The spherical version of this measure is the standard multivariate normal distribution $\mu_{\Id}\sim \normal(0,\Id)$.
		\item Another well-known example of elliptical distribution is the multivariate t-distribution, which generalizes the Student's t-distribution. Its density is defined as
		\begin{equation}
			\label{eq: multivariate t-distribution}
			\rho(x)=
			Z^{-1}
			\left(1+\frac{1}{\gamma}\norm{x}^2_{\Sigma^{-1}}\right)^\mlap{-\frac{\gamma+n}{2}},\qquad x\in \R^n,
		\end{equation}
		with $\gamma>0$, that we recover when we take $g(r)$
		proportional to
		$(1+r^2/\gamma)^{-\frac{\gamma+n}{2}}$. We denote this measure by $\mu_{\Sigma,\gamma}$.
		\\
		The corresponding spherical distribution $\mu_{\Id,\gamma}$ is nothing but an affine transformation of the generalized Cauchy distribution. The latter, denoted $\nu_\kappa$, depends on a parameter $\kappa>n/2$ and has the density
		\begin{equation*}
			x\in \R^n\mapsto 
			Z^{-1}
			(1+\norm{x}^2)^{-\kappa}.
		\end{equation*}
		\item Finally, consider the symmetric multivariate logistic distribution, that we have taken from \cite{elliptical_distributions_in_statistics}, with density function
		\begin{equation}
			\label{eq: multivariate logistic density}
			\rho(x)=
			Z^{-1}
			\frac{e^{-\norm{x}^2_{\Sigma^{-1}}} } {\left(1+ e^{-\norm{x}^2_{\Sigma^{-1}}}\right)^2}, \quad x\in \R^n.
		\end{equation}
		This measure is obtained by choosing $g(r)=
		e^{-r^2} (1+e^{-r^2})^{-2}$.
	\end{enumerate}
	
	Any elliptical distribution is related to its spherical counterpart $\mu_\Sigma$ through a simple linear transformation. More precisely, $\mu_\Sigma$ is the image measure of $\mu_\Id$ under
	the linear mapping $x\mapsto T(x)=\Sigma^{\frac{1}{2}} x$, where $\Sigma^{\frac{1}{2}}\in \matClass^n$ is the unique positive definite square root of $\Sigma$. One can then apply the transport argument in \eqref{eq: weighted Poincare inequality by transport} to obtain
	weighted Poincar\'e inequalities for any elliptical measure, once the spherical case has been treated.
	\\
	In the spherical setting, inequalities 
	have already been investigated. See for instance Theorem 5.1 in \cite{joulin_bonnefont_spherically_symmetric}, which can be seen as an analogue of Theorem \ref{theorem: Liouville measures Poincare inequality} for spherical distributions $\mu_\Id$. When applying this result, the resulting weights take the natural form $ \Weight_\Id(x)=w(\norm{x})\,\Id$, where $w$ is a one-dimensional function. This type of weights leads to simple inequalities in the elliptical setting, as stated in the following proposition. Its proof follows directly from the transport argument in \eqref{eq: weighted Poincare inequality by transport}.
	\begin{prop}
		\label{prop: elliptical measures Poincare inequality}
		Let $\mu_\Sigma$ be an elliptical distribution with matrix parameter $\Sigma\in \matClassR^n$. Suppose that the corresponding spherical distribution $\mu_\Id$ satisfies a weighted Poincar\'e inequality with weight $W_\Id(x)=w(\norm{x})\,\Id$, where $w\in \weightClass{\Rplus}$. Then $\mu_\Sigma$ satisfies the inequality
		\begin{equation*}
			\Var_{\mu_\Sigma}(f)\leq C_P(\mu_{\Id},\Weight_\Id)\int_{\R^n} w\left(\norm{x}_{\Sigma^{-1}}\right) \norm{\nabla f}_{\Sigma}^2\,d\mu_{\Sigma},
		\end{equation*}
		for which the constant $C_P(\mu_{\Id},\Weight_\Id)$ is optimal.
	\end{prop}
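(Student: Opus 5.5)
The plan is to apply the transport argument \eqref{eq: weighted Poincare inequality by transport} to the linear map $T(x)=\Sigma^{\frac12}x$, which pushes $\mu_\Id$ forward to $\mu_\Sigma$. Set $\nu=\mu_\Id$ and $W_\nu=W_\Id$.

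First I would check the change of variables. If $y=\Sigma^{\frac12}x$, then $\norm{x}^2=\langle \Sigma^{-1}y,y\rangle=\norm{y}_{\Sigma^{-1}}^2$. The Jacobian $\det(\Sigma^{\frac12})$ is constant and is absorbed into the normalization, so the density of $T\#\mu_\Id$ is proportional to $g(\norm{y}_{\Sigma^{-1}})$. Hence $T\#\mu_\Id=\mu_\Sigma$.

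Next I would compute the transported weight. Since $\Jac(T)=\Sigma^{\frac12}$ is constant and symmetric,
\[
W_{\mu_\Sigma}(y)=\bigl(\Sigma^{\frac12}\,w(\norm{x})\Id\,\Sigma^{\frac12}\bigr)\big|_{x=\Sigma^{-\frac12}y}=w\bigl(\norm{y}_{\Sigma^{-1}}\bigr)\,\Sigma .
\]
It follows that $\norm{\nabla f}^2_{W_{\mu_\Sigma}}=w(\norm{y}_{\Sigma^{-1}})\,\norm{\nabla f}_\Sigma^2$. Substituting this into \eqref{eq: weighted Poincare inequality by transport} gives the stated inequality with constant $C_P(\mu_\Id,W_\Id)$.

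For optimality, I would invoke the remark following \eqref{eq: weighted Poincare inequality by transport}. Because $T$ is a linear diffeomorphism of $\R^n$, the optimal constants coincide, $C_P(\mu_\Sigma,W_{\mu_\Sigma})=C_P(\mu_\Id,W_\Id)$. The reverse direction uses $T^{-1}=\Sigma^{-\frac12}$, which maps the weight back to $w(\norm{x})\Id$.

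There is no real obstacle here. The only points needing care are the bookkeeping of the transposes in $\Jac(T)W_\nu\Jac(T)^\top$, where symmetry of $\Sigma^{\frac12}$ makes this immediate, and the correspondence of the admissible function classes: $f\circ T$ has finite right-hand side exactly when $f$ does.
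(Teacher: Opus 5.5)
Your proposal is correct and follows essentially the same route as the paper. The paper derives the proposition directly from the transport inequality \eqref{eq: weighted Poincare inequality by transport} applied to $T(x)=\Sigma^{\frac12}x$, which pushes $\mu_\Id$ forward to $\mu_\Sigma$ and yields the weight $w(\norm{y}_{\Sigma^{-1}})\,\Sigma$; optimality follows from the invariance of the optimal constant under diffeomorphic transport, just as in your argument.
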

	
	We illustrate this inequality for the examples introduced above.
	
	\begin{example}
		Consider the multivariate normal distribution $\mu_\Sigma\sim \normal(0,\Sigma)$. Applying Proposition \ref{prop: elliptical measures Poincare inequality} to this measure we obtain a result which coincides with a classical one. However, it is interesting to observe that it can be recovered directly from the standard spherical case where $\mu_\Id\sim \normal(0,\Id)$. 
		
		It is well-known that the measure $\mu_\Id$ satisfies a classical Poincar\'e inequality (\ie with weight $\Weight_\Id\equiv\Id$) and optimal constant $C_P(\mu_\Id,\Id)=1$. Thus, from Proposition \ref{prop: elliptical measures Poincare inequality} it follows that $\mu_\Sigma$ satisfies the weighted inequality
		\begin{equation}
			\label{eq: weighted Poincare inequality normal gaussian}
			\Var_{\mu_\Sigma}(f)\leq \int_{\R^n} \norm{\nabla f}_{\Sigma}^2\,d\mu_{\Sigma}.
		\end{equation}
		The same result can also be directly obtained applying the Brascamp-Lieb inequality \eqref{eq: Brascamp-Lieb}.
	\end{example}
	\begin{example}
		\label{example: Example t-student}
		Let now $\mu_{\Sigma,\gamma}$ denote the multivariate t-distribution with density given in \eqref{eq: multivariate t-distribution}. Since this measure is heavy-tailed, it does not satisfy a classical Poincar\'e inequality. However, it satisfies a weighted Poincar\'e inequality obtained from the spherical case. Indeed, the spherical distribution $\mu_\Id$ is the image measure of the generalized Cauchy distribution $\nu_{(\gamma+n)/2}$ under the mapping $ T(x)=\gamma^{\frac{1}{2}} x$. An available weight for the latter measure is the function $x\in \R^n\mapsto (1+\norm{x}^2)\,\Id$. Due to transport, it induces a weighted Poincar\'e inequality for $\mu_{\Id,\gamma}$, with weight $ \Weight_{\Id,\gamma}(x)=(\gamma+\norm{x}^2)\,\Id$. Moreover, it preserves the same optimal Poincar\'e constant, which has been completely determined in \cite{huguet_cauchy}, showing that
		\begin{equation}
			\label{eq: optimal constant Cauchy}
			C_P(\mu_{\Id,\gamma},\Weight_{\Id,\gamma})=C_\gamma:=
			\begin{cases}
				\dfrac{4}{\gamma^2}, & \mbox{if } 0<\gamma\leq 4, \\
				\dfrac{1}{2\gamma-4}, & \mbox{if }4\leq \gamma \leq n+2, \\
				\dfrac{1}{\gamma+n-2}, & \mbox{if } n+2\leq \gamma.
			\end{cases}
		\end{equation}
		Based on this inequality we obtain a corresponding result for the elliptical measure
		$\mu_{\Sigma,\gamma}$. From from Proposition \ref{prop: elliptical measures Poincare inequality} it follows that
		\begin{equation*}
			\Var_{{\mu}_{\Sigma,\gamma}}(f)\leq C_\gamma \int_{\R^n} (\gamma+\norm{x}_{\Sigma^{-1}}^2)\norm{\nabla f}_{\Sigma}^2\,d\mu_{\Sigma,\gamma}.
		\end{equation*}
	\end{example}
	\begin{example}
		Finally, we turn our attention to the symmetric multivariate logistic distribution $\mu_\Sigma$ with density in \eqref{eq: multivariate logistic density}. We first establish a weighted Poincar\'e inequality for the spherical measure $\mu_\Id$ using its radial part, which is represented by the one-dimensional probability measure $\muR\in\probClass{\Rplus}$ with density proportional to
		\[r\in \Rplus\mapsto 
		r^{n-1}\, 
		e^{-r^2} (1+e^{-r^2})^{-2}.\]
		In the Appendix we prove that $\muR$ satisfies a weighted Poincar\'e inequality with weight $\wR(r)=r^2$ and optimal constant $C_P(\muR,\wR)$ which is bounded from above by
		\[D_n =\begin{cases}
			\dfrac{4}{n^2}, & \mbox{if }n\leq 4,\\[12pt]
			\dfrac{1}{2(n-2)}, & \mbox{if }n\geq 4.
		\end{cases}\]
		Therefore, applying Theorem 5.1 in \cite{joulin_bonnefont_spherically_symmetric} it follows that $\mu_\Id$ satisfies a weighted Poincar\'e inequality with weight $ \Weight_\Id(x)=\norm{x}^2 \Id$ and optimal constant satisfying the bound
		\[C_P(\mu_\Id,\Weight_\Id)\leq \max\left(C_P(\muR,\wR),\frac{1}{n-1}\int_{\Rplus} \frac{r^2}{\wR(r)} \,d\muR(r) \right)= \frac{1}{n-1}.\]
		Therefore by Proposition \ref{prop: elliptical measures Poincare inequality} the elliptical measure $\mu_\Sigma$ satisfies
		\begin{equation*}
			\Var_{\mu_\Sigma}(f)\leq \frac{1}{n-1} \int_{\R^n} \norm{x}_{\Sigma^{-1}}^2\norm{\nabla f}_{\Sigma}^2\,d\mu_{\Sigma}. 
		\end{equation*}
		In the case $n=1$, $\mu_\Sigma$ matches with the radial measure $\muR$ and we have $C_P(\muR,\wR)\leq 4$.
	\end{example}
	
	\section{Weighted Poincar\'e inequalities for measures with prescribed copulas}
	\label{sec: Measures with prescribed copulas}
	In this part, rather than focusing on a specific class of probability distributions, we discuss a general approach to establish weighted Poincar\'e inequalities for measures whose structure of dependence is determined by a known copula. This situation is particularly important in statistical applications, where probability distributions
	are often modeled according to classical copulas. For comprehensive introductions to copulas, we refer to \cite{Nelsen_copulas,copulas_harry}.

	Let $\Omega\subset \R^n$ be a connected open set with a piecewise $\C^1$ boundary. Let $\mu \in \probClass{\Omega}$ be a probability measure with density $\rho$. Denote the cumulative distribution function (cdf) of $\mu$ by
	\[
	F(x_1,\dots,x_n)=\mu\!\left(\Omega \cap\prod_{i=1}^n (-\infty,x_i)\right),
	\quad x\in \R^n.
	\]
	For each $i\in \{1,\dots,n\}$, let $\mu_i$ denote its $i$-th marginal distribution, with density $\rho_i$ and cdf $F_i$.
	By Sklar's theorem (Theorem 2.3.3 in \cite{Nelsen_copulas}), there exists a unique function
	\[(u_1,\dots,u_n)\in [0,1]^n\mapsto \cop(u_1,\dots,u_n)\in [0,1],\]
	called the copula of $\mu$, such that
	\begin{equation}
		\label{eq: copula definition}
		F(x_1,\dots,x_n)=\cop\big(F_1(x_1),\dots,F_n(x_n)\big),
		\quad (x_1,\dots,x_n)\in\R^n.
	\end{equation}
	The copula $\cop$ encodes the structure of dependence of $\mu$, so that the measure is completely determined by its marginals and $\cop$. For instance, the density can be decomposed as
	\begin{equation}
		\label{eq: density from copula}
		\rho(x)=\prod_{i=1}^n\rho_i(x_i)\,\frac{\partial^n\, \cop}{\partial u_1,\dots, \partial u_n} \left(F_1(x_1),\dots,F_n(x_n)\right),\quad x\in \Omega,
	\end{equation}
	which follows immediately from \eqref{eq: copula definition}. Various aspects of dependence can be measured in terms of $\cop$, such as concordance and tail dependence  (see \cite{Nelsen_copulas,copulas_harry}). 
	
	Encoding in such a way the structure of dependence simplifies the transport between distributions sharing a common copula. More precisely, if two distributions $\mu\in \probClass{\Omega}$ and $\mutild\in\mathcal{P}(\Tilde{\Omega})$ have the same copula, then the transport between them is reduced to the transport of their marginals.
	In other words, $\mu$ is the image measure of $\mutild$ by the Nataf transformation, defined as
	\begin{equation}
		\label{eq: Nataf transformation}
		T(\tildx)
		=(T_{1}(\tildx_1),\dots,T_{n}(\tildx_n))
		=(\FX{1}^{-1}\circ \FtilX{1}(\tildx_1),\dots,\FX{n}^{-1}\circ \FtilX{n}(\tildx_n)),\quad \tildx\in \Tilde{\Omega},
	\end{equation}
	where each $\FtilX{i}$ is the cdf of the marginal $\mutild_i$, of density $\rhotild_i$.
	Recall that in this one-dimensional setting, each $T_i$ is the optimal transport map with respect to the $p$-Wasserstein distance, for all
	$1\leq p<+\infty$. In the present framework where $\mu$ and $\mutild$ share the same copula, this property extends to the Nataf transform $T$ (see \cite{nataf_optimal}). The inverse mapping of $T$ is also a Nataf transformation, from $\mu$ to $\mutild$, given by
	\begin{equation*}
		T^{-1}(x) = (T_1^{-1}(x_1),\dots,T_n^{-1}(x_n)) = (\FtilX{1}^{-1}\circ \FX{1}(x_1), \dots, \FtilX{n}^{-1}\circ \FX{n}(x_n)), \quad  x \in \Omega.
	\end{equation*}
	
	Using the relation $\mu=T_{\#} \mutild$, weighted Poincar\'e inequalities for $\mu$ can be obtained directly from those satisfied by $\mutild$ via the transport argument in \eqref{eq: weighted Poincare inequality by transport}. This is formalized in the following proposition. In this case, since the map $T$ acts component-wise, the matrix $D=\Jac(T)\circ T^{-1}$ is diagonal.
	\begin{prop}
		\label{prop: copula Poincare inequality}
		Let $\mu$ and $\mutild$ be two probability distributions sharing the same copula. Let $T$ be the Nataf transformation defined in \eqref{eq: Nataf transformation}.
		Suppose that $\mutild$ satisfies a weighted Poincar\'e inequality with weight $\Tilde{\Weight}$. Then the probability measure $\mu$ satisfies the following inequality
		\begin{equation}
			\label{eq: copula weighted Poincare inequality}
			\Var_\mu(f)\leq C_P(\mutild,\wtild)\int_{\Omega} \norm{D\, \nabla f}^2_{\Tilde{W}\circ T^{-1}}\,d\mu,
		\end{equation}
		where $D$ is the diagonal matrix with entries
		\begin{equation}
			\label{eq: diagonal term D}
			D_{i,i}=T_i'\circ T_i^{-1}=\frac{\rhotild_i}{\rho_i\circ
				F_{i}^{-1}\circ \FtilX{i}}\circ (T_i^{-1})  =\frac{\rhotild_i\circ T_i^{-1}}{\rho_i}.
		\end{equation}
	\end{prop}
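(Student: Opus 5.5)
The plan is to apply the transport argument \eqref{eq: weighted Poincare inequality by transport} with $\nu=\mutild$, $\Weight_\nu=\wtild$, and $T$ the Nataf transformation \eqref{eq: Nataf transformation}. Before doing so, we must check that $T$ is a diffeomorphism from $\Tilde{\Omega}$ onto $\Omega$ and that $\mu=T_\#\mutild$.

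\textbf{Step 1: $T$ is a diffeomorphism.} Each marginal density is positive on the relevant projection of the domain. Hence $\FX{i}$ and $\FtilX{i}$ are $\C^1$ and strictly increasing there, with derivatives $\rho_i$ and $\rhotild_i$. By the inverse function theorem, each $T_i=\FX{i}^{-1}\circ\FtilX{i}$ is then a $\C^1$ increasing bijection with $\C^1$ inverse $\FtilX{i}^{-1}\circ\FX{i}$.

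\textbf{Step 2: $\mu=T_\#\mutild$.} Let $\Tilde{X}\sim\mutild$. Since each $T_i$ is increasing,
\[
\mathbb{P}\big(T(\Tilde{X})_i<x_i\ \forall i\big)=\mathbb{P}\big(\Tilde{X}_i<T_i^{-1}(x_i)\ \forall i\big)=\cop\big(\FtilX{1}(T_1^{-1}(x_1)),\dots\big)=\cop\big(\FX{1}(x_1),\dots,\FX{n}(x_n)\right)\!,
\]
which equals $F(x)$ by \eqref{eq: copula definition}. So the two cdfs agree, and therefore the measures agree.

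\textbf{Step 3: the transported weight.} Since $T$ acts componentwise, $\Jac(T)$ is diagonal with entries $T_i'$. Set $D=\Jac(T)\circ T^{-1}$. The weight produced by \eqref{eq: weighted Poincare inequality by transport} is
\[
\Weight_\mu=(\Jac(T)\wtild\Jac(T)^\top)\circ T^{-1}=D\,(\wtild\circ T^{-1})\,D,
\]
using $D^\top=D$. Consequently $\norm{\nabla f}^2_{\Weight_\mu}=\langle (\wtild\circ T^{-1})D\nabla f,D\nabla f\rangle=\norm{D\nabla f}^2_{\wtild\circ T^{-1}}$, which is exactly \eqref{eq: copula weighted Poincare inequality} with constant $C_P(\mutild,\wtild)$. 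Moreover, the transport discussion shows this constant is optimal for $\Weight_\mu$.

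\textbf{Step 4: the formula for $D_{i,i}$.} Differentiating $\FX{i}\circ T_i=\FtilX{i}$ gives $\rho_i(T_i)\,T_i'=\rhotild_i$, so $T_i'=\rhotild_i/(\rho_i\circ\FX{i}^{-1}\circ\FtilX{i})$. Composing with $T_i^{-1}$ and using $T_i\circ T_i^{-1}=\mathrm{id}$ then yields both expressions in \eqref{eq: diagonal term D}.

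The only genuinely delicate point is Step 2: it requires the domain $\Omega$ to coincide with the image $T(\Tilde{\Omega})$, i.e.\ the supports are compatible. This holds because both sets are determined by the common copula's support together with the marginal ranges. I would argue it through the density formula \eqref{eq: density from copula}, checking it directly by the change of variables $x=T(\tildx)$ with Jacobian $\prod_i T_i'$.
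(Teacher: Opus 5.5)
Your proposal is correct and follows the paper's route: the paper obtains the proposition by inserting the Nataf map into the transport identity \eqref{eq: weighted Poincare inequality by transport}, taking $\mu=T_\#\mutild$ as given and noting that $D=\Jac(T)\circ T^{-1}$ is diagonal, so your Steps 1--4 spell out details the paper leaves implicit. The domain-compatibility issue you raise at the end is also left implicit in the paper, which tacitly identifies $\Omega$ with $T(\Tilde{\Omega})$; note that your cdf computation in Step 2 already proves the equality of measures without assuming it.
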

	Note that this inequality remains valid in the one-dimensional case $n=1$. Actually, in this setting all distributions share the unique copula $C(u)=u$, $u\in [0,1]$. Consequently, for any one-dimensional measures $\mu$ and $\mutild$ and any available weight $\Tilde{w}$, we obtain
	\begin{equation}
		\label{eq: copula weighted Poincare inequality 1D}
		\Var_\mu(f)\leq C_P(\mutild,\Tilde{w})\int_{\Omega} \left(\tilde{w}\circ T^{-1}(x)\right) \,\frac{\rhotild \circ T^{-1}(x)}{\rho(x)}\,\left(f'(x)\right)^2\,d\mu(x).
	\end{equation}
	
	In practice, to establish a weighted Poincar\'e inequality for a given probability measure $\mu$ with copula $\cop$  --for instance, a measure arising from a statistical model-- one first selects a reference measure $\mutild$ sharing the same copula and for which an inequality is known. Then the corresponding inequality for $\mu$ is given in \eqref{eq: copula weighted Poincare inequality}. Whenever the reference densities $\rhotild_i$ and quantile functions $\FtilX{i}^{-1}$ admit closed-form expressions, it can be written explicitly in terms of $\rho_i$ and $F_{i}$. Below we illustrate this approach with examples involving classical copulas. 
	
	\mathTitle{The Clayton copula}
	Consider the $n$-variate Clayton copula $\cop_\theta$ of parameter $\theta>0$, defined as
	\[\cop_\theta(u)=\left(1+\sum_{i=1}^n (u_i^{-\theta}-1)\right)^\mlap{-\frac{1}{\theta}},\qquad u\in [0,1]^n.\]
	This copula is commonly used to model lower tail dependence, which is completely determined by the parameter $\theta$ (see \eg \cite{copulas_harry}).
	This parameter also determines the concordance between marginals, measured in terms of Kendall rank correlation coefficient \cite{tau_elliptical_copulas}.
	
	As a reference measure with copula $\cop_\theta$, consider $\mutild$ defined in the strictly negative orthant $\Rminus^n=(-\infty,0)^n$ with density given by
	\begin{equation*}
		\rhotild(\Tilde{x})= \frac{\Gamma\left(\frac{1}{\theta}+n\right)}{\Gamma\left(\frac{1}{\theta}\right)} \left(1-\sum_{i=1}^n \tildx_i\right)^\mlap{-\left(\frac{1}{\theta}+n\right)},\qquad \quad \tildx\in \Rminus^n.   
	\end{equation*}
	It corresponds to the image measure of the multivariate Pareto distribution in \eqref{eq: multivariate pareto} with parameter $\gamma=1/\theta$, under the mapping $x\in \Rplus\mapsto -x$. We verify that $\cop_\theta$ is the copula of
	$\Tilde{\mu}$. Indeed, the marginal distributions of $\mutild$ have common densities and cdf given by
	\[\rhotild_i(\tildx_i)=\frac{1}{\theta}(1-\tildx_i)^{-\left(\frac{1}{\theta}+1\right)},\qquad \FtilX{i}(\tildx_i)=(1-\tildx_i)^{-\frac{1}{\theta}},\qquad \mbox{for all }\tildx_i\in {\Rminus}\,.\]
	Replacing these functions and the copula $\cop_\theta$ in \eqref{eq: density from copula}, we recover the density $\rhotild$:
	for all $\tildx\in \Rminus^n$ we have
	\begin{multline*}
		\prod_{i=1}^n \tilde{\rho}_i(\tilde{x}_i) \frac{\partial^n \mathcal{C}_\theta}{\partial u_1 \dots \partial u_n} \left( \tilde{F}_1(\tilde{x}_1), \dots, \tilde{F}_n(\tilde{x}_n) \right) \\
		= \prod_{i=1}^n \left( \frac{1}{\theta} (1-\tilde{x}_i)^{-\left(\frac{1}{\theta}+1\right)} \right) \left[ \prod_{i=1}^n \left( \theta\, (1-\tilde{x}_i)^{ \frac{1}{\theta}+1} \right) \prod_{k=0}^{n-1} \left( \frac{1}{\theta} + k \right) \left( 1 - \sum_{i=1}^n \tilde{x}_i \right)^{-\left(\frac{1}{\theta} + n\right)}\right]= \rhotild(\tildx),
	\end{multline*}
	meaning that $\cop_\theta$ is the copula of $\Tilde{\mu}$ by uniqueness.
	
	In this way, given any other probability measure $\mu\in \probClass{\Omega}$ having the Clayton copula $\cop_\theta$,
	we can use a weighted Poincar\'e inequality for $\mutild$ as a proxy to obtain a corresponding inequality for $\mu$. For instance, recall that in \eqref{eq: Poincare inequality inverted Dirichlet} we provide an inequality with explicit optimal constant for the inverted Dirichlet distribution $\dirichInv$, which generalizes the multivariate Pareto distribution. Choosing the parameters $\alpha=\uno$, $\alpha_0=1/\theta$ and performing a change of sign we obtain the following weight for $\mutild$:
	\[\wtild(\tildx)=\left(1+\sum_{i=1}^n (-\tildx_i)\right)(\diag(-\tildx)+(-\tildx) (-\tildx)^\top),\qquad \tildx\in \Rminus^n,\]
	with optimal constant $C_P(\mutild,\wtild)=\theta/(1+n\,\theta)$. Now, since we can write explicitly
	\[\FtilX{i}^{-1}(u_i)=1-u_i^{-\theta},\qquad \rhotild_i\circ \FtilX{i}^{-1} (u_i)=\frac{1}{\theta}u_i^{\theta+1},\qquad u_i\in (0,1),\] 
	then the corresponding inequality 
	for $\mu$ given in Proposition \ref{prop: copula Poincare inequality} takes the form
	\begin{equation}
		\label{eq: weighted Poincare inequality clayton copula}
		\scaleobj{0.95}{
			\dsp
			\Var_\mu(f)\leq \frac{\theta}{1+n\,\theta} \int_\Omega \left(1+\sum_{i=1}^n T_i^{-1}\right)\left[\sum_{i=1}^n T_i^{-1}\left(D_{i,i}\,\frac{\partial f}{\partial x_i}\right)^2+\left(\sum_{i=1}^n T_i^{-1}\,D_{i,i}\,\frac{\partial f}{\partial x_i}\right)^2\right] \,d\mu,}
	\end{equation}
	where
	\[T_i^{-1}(x_i)=F_i(x_i)^{-\theta}-1\qquad\mbox{and}\qquad D_{i,i}(x_i)=\frac{1}{\theta}\frac{F_i(x_i)^{\theta+1}}{\rho_i(x_i)}.\]
	Despite its apparent complexity, this explicit inequality finds relevant applications in global sensitivity analysis, as presented in the forthcoming Section \ref{section: Application to Global Sensitivity Analysis}.
	
	\mathTitle{Elliptical copulas}
	In Section \ref{sec: Weighted Poincare inequalities for elliptically contoured distributions} we considered the case of elliptical distributions $\mu_\Sigma$, where $\Sigma$ belongs to the class of matrices $\matClassR^n$ in \eqref{eq: matClassR matrices renormalized}. The copulas $\cop_\Sigma$ arising from distributions of this type are also called elliptical.  They provide a flexible framework for modeling dependence. Among other reasons, this is because if a probability measure $\mu$
	has an elliptical copula, say $\cop_\Sigma$, then the concordance between its marginals is entirely determined by the matrix parameter $\Sigma$ (see \eg \cite{elliptical_distributions_dependence}). Consequently, the level of concordance can be adjusted conveniently in situations where the practitioner is free to choose $\Sigma$. 
	
	While a distribution $\mu$ may possess an elliptical copula $\cop_\Sigma$, it is not necessarily elliptical itself. General distributions having elliptical copulas are referred to as meta-elliptical (see \cite{meta_elliptical_distributions}). Our approach thus yields weighted Poincar\'e inequalities for such distributions by taking the reference measure $\mutild$ to be the elliptical one $\mu_\Sigma$ associated with the copula $\cop_\Sigma$, whenever an inequality for this measure is available.
	\\
	For instance, consider the multivariate t-distribution $\mutild=\mu_{\Sigma,\gamma}$ defined in \eqref{eq: multivariate t-distribution}, with $\gamma>0$. Its associated copula $\mathcal{C}_{\Sigma,\gamma}$ is known as the t-copula. As shown in Example \ref{example: Example t-student}, an available weight for this measure is $\wtild(x)=\left(\gamma+\norm{x}^2_{{\Sigma}^{-1}}\right)\Sigma$. Then, if $\mu\in \probClass{\Omega}$ is a meta-elliptical measure with copula $\mathcal{C}_{\Sigma,\gamma}$, its corresponding weighted Poincar\'e inequality
	given by Proposition \ref{prop: copula Poincare inequality} is
	\[\Var_\mu(f)\leq C_\gamma \int_{\Omega}(\gamma+\norm{T^{-1}(x)}^2_{\Sigma^{-1}})\,\norm{D(x)\,\nabla f (x)}_\Sigma^2 \,d\mu(x),\]
	where the optimal constant $C_\gamma$ is given in \eqref{eq: optimal constant Cauchy}. 
	\\
	When $\gamma\in \{1,2\}$, the functions $\rhotild_i$ and $\FtilX{i}^{-1}$ admit closed-form expressions, allowing $T^{-1}$ and $D$ to be expressed in terms of the target marginal functions $\rho_i$ and $F_i$. Indeed, the marginals of $\mutild$ are Student's t-distributions with a common density given by
	\[\rhotild_i(\tildx_i)=\frac{\Gamma\left(\frac{\gamma+1}{2}\right)}{(\gamma\pi)^{\frac{1}{2}}\Gamma\left(\frac{\gamma}{2}\right)}\left(1+\frac{1}{\gamma}\tildx_i^2\right)^\mlap{-\frac{\gamma+1}{2}},\qquad \tildx_i\in \R.\] 
	When $\gamma=1$, for instance, each marginal $\mutild_i$ reduces to the classical Cauchy distribution, for which
	\[\rhotild_i(\tildx_i)=\frac{1}{\pi(1+\tildx_i^2)}, \quad \FtilX{i}^{-1}(u_i)=\tan\left(\pi\left(u_i-\frac{1}{2}\right)\right), \quad  \tildx_i \in \mathbb{R},\; u_i \in (0,1).\]
	Another classical example of elliptical copula is the Gaussian one, associated with the multivariate normal distribution. We treat this case separately, providing a general result.
	
	\mathTitle{The Gaussian copula}
	The Gaussian copula $\cop_\Sigma$ is the elliptical copula associated with the multivariate normal distribution $\mu_\Sigma\sim\normal(0,\Sigma)$, where $\Sigma\in \matClassR^n$ by convention. There is no closed-form expression for $\cop_\Sigma$. However, from \eqref{eq: density from copula} one can easily see that the density of any probability measure $\mu\in\probClass{\Omega}$ with this copula takes the form
	\begin{equation}
		\label{eq: gaussian copula density}
		\dsp\rho(x) = \prod_{i=1}^n \rho_i(x_i) \frac{1}{\sqrt{\det (\Sigma)}} \exp \left( -\frac{1}{2} \norm{\bigl(\Phi^{-1}\circ F_1(x_1),\dots,\Phi^{-1}\circ F_n(x_n)\bigr)}_{\,\left(\Sigma^{-1}-\scaleobj{0.9}{\Id}\right)}^2 \right),
	\end{equation}
	for all $x\in \Omega$. Above, $\Phi$ denotes the cdf of the one-dimensional standard normal distribution $\normal(0,1)$, of density $\varphi$.
	
	Using our approach we can already obtain weighted Poincar\'e inequalities for $\mu$ based on the normal distribution $\mu_\Sigma$. For instance, as a combination of inequality \eqref{eq: weighted Poincare inequality normal gaussian} for $\mu_\Sigma$ and
	Proposition \ref{prop: copula Poincare inequality} we have
	\begin{equation}
		\label{eq: weighted Poincare inequality gaussian copula}
		\Var_\mu(f)\leq \int_{\Omega}\norm{D\,\nabla f }_\Sigma^2 \,d\mu.
	\end{equation}
	Here, $D$ is the diagonal matrix with entries $D_{i,i}=(\varphi\circ \Phi^{-1}\circ \FX{i})/\rho_i$. This inequality will be applied in the next section in the context of global sensitivity analysis.
	\\
	Since the weight $\Sigma$ is constant, we also obtain the following classical Poincar\'e inequality:
	\begin{equation}
		\label{eq: classical Poincare inequality constant gaussian copula Lipschitz constant}
		\Var_\mu(f) 
		\leq \lambda_{\max}(\Sigma) \int_{\Omega} \sum_{i=1}^n \left(\frac{\varphi\circ \Phi^{-1}\circ \FX{i}}{\rho_i}\,\frac{\partial f}{\partial x_i}\right)^2 \,d\mu
		\leq\lambda_{\max}(\Sigma)\, \max_{i}K_i^2\int_\Omega \norm{\nabla f}^2\,d\mu,
	\end{equation}
	provided the constants
	\[K_i=\sup_{x_i} \frac{\varphi\circ \Phi^{-1}\circ \FX{i}(x_i)}{\rho_i(x_i)}=\sup_{\tildx_i} \frac{\varphi(\tildx_i)}{\rho_i\circ \Phi \circ F_i^{-1}(\tildx_i)},\]
	are finite. 
	According to \eqref{eq: diagonal term D}, each $K_i$ is the Lipschitz constant $\norm{T_i}_{\Lip
	}$ of the optimal transport map from the standard normal distribution to $\mu_i$, given by $T_i=F^{-1}_i\circ \Phi$. As such, although
	$K_i$ does not admit a closed-form expression in general, useful bounds are available under log-concavity assumptions on the marginal $\mu_i$. A fundamental result is Caffarelli's contraction theorem (see \cite{caffarelli}). It states that if $\nu\in \probClass{\Omega}$, with $\Omega$ convex, is a probability measure with density $\rho_\nu=e^{-V_\nu}$ such that $\Hess(V_\nu)\geq \kappa\,\Id$ for some $\kappa>0$, then the optimal transport mapping from the standard multivariate normal distribution to $\nu$ has Lipschitz constant bounded from above by $1/\sqrt{\kappa}$. 
	
	As a direct consequence of Caffarelli's contraction theorem combined with \eqref{eq: classical Poincare inequality constant gaussian copula Lipschitz constant} for each marginal distribution $\mu_i$, one obtains the following result for probability measures with Gaussian copulas and whose marginals are uniformly log-concave.
	\begin{prop}
		Let $\mu\in \probClass{\Omega}$ be a probability distribution with Gaussian copula $\cop_\Sigma$, with $\Sigma\in \matClassR^n$. Assume that each marginal distribution $\mu_i$ is uniformly log concave, that is, with density $\rho_i=e^{-V_{i}}$ satisfying $V_{i}''\geq \kappa_i>0$. Then $\mu$ satisfies a classical Poincar\'e inequality with optimal constant such that
		\begin{equation*}
			C_P(\mu)\leq \lambda_{\max}(\Sigma) \max_{i} \frac{1}{\kappa_i}.
		\end{equation*}
	\end{prop}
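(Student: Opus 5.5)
The plan is to combine inequality \eqref{eq: classical Poincare inequality constant gaussian copula Lipschitz constant} with Caffarelli's contraction theorem, applied one marginal at a time. By \eqref{eq: classical Poincare inequality constant gaussian copula Lipschitz constant}, which follows from \eqref{eq: weighted Poincare inequality gaussian copula} together with $\norm{v}_\Sigma^2\leq \lambda_{\max}(\Sigma)\norm{v}^2$, we already have
\[
C_P(\mu)\leq \lambda_{\max}(\Sigma)\,\max_i K_i^2 ,
\qquad
K_i=\sup_{x_i}\frac{\varphi\circ\Phi^{-1}\circ F_i(x_i)}{\rho_i(x_i)}.
\]
It therefore suffices to show that $K_i\leq 1/\sqrt{\kappa_i}$ for each $i$.

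To bound $K_i$, I will use the identification given after \eqref{eq: classical Poincare inequality constant gaussian copula Lipschitz constant}. The map $T_i=F_i^{-1}\circ\Phi$ pushes $\normal(0,1)$ forward to $\mu_i$. Its derivative is $T_i'(y)=\varphi(y)/\rho_i(T_i(y))$. Substituting $x_i=T_i(y)$, which is a bijection from $\R$ onto the support interval of $\mu_i$, gives $K_i=\sup_y T_i'(y)=\norm{T_i}_{\Lip}$, since $T_i$ is increasing.

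In dimension one, $T_i$ is the monotone rearrangement, and this is exactly the optimal transport map from $\normal(0,1)$ to $\mu_i$. The marginal $\mu_i$ lives on an interval, which is convex, and has potential $V_i$ with $V_i''\geq\kappa_i>0$. Caffarelli's theorem, applied with $n=1$ and $\nu=\mu_i$, then gives $\norm{T_i}_{\Lip}\leq 1/\sqrt{\kappa_i}$, that is, $K_i^2\leq 1/\kappa_i$. Plugging this into the first display yields $C_P(\mu)\leq\lambda_{\max}(\Sigma)\max_i 1/\kappa_i$.

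The only delicate point is checking that the hypotheses of Caffarelli's theorem hold for each marginal. The support of $\mu_i$ must be an open interval on which $V_i$ is defined; this is automatic, since uniform log-concavity forces the support to be an interval. In dimension one there is also an elementary alternative that I would use as a fallback. One shows directly that $y\mapsto T_i'(y)$ cannot exceed $1/\sqrt{\kappa_i}$ by comparing the ODE $\log T_i'=\log\varphi-\log\rho_i\circ T_i$. Differentiating gives $(\log T_i')'=-y+V_i'(T_i)T_i'$. At an interior maximum of $T_i'$, differentiating once more yields $0\geq -1+V_i''(T_i)(T_i')^2\geq -1+\kappa_i(T_i')^2$. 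Boundary behaviour is handled by the standard Gaussian-tail comparison. I expect no substantial obstacle beyond this verification.
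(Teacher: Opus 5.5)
Your proposal is correct and follows the paper's own argument: the paper also bounds $C_P(\mu)\leq\lambda_{\max}(\Sigma)\max_i K_i^2$, identifies $K_i$ with the Lipschitz constant of the one-dimensional optimal map $T_i=F_i^{-1}\circ\Phi$, and bounds it by $1/\sqrt{\kappa_i}$ via Caffarelli's contraction theorem applied to each marginal. Your ODE fallback is a reasonable elementary substitute in dimension one, but as written it handles only an interior maximum of $T_i'$ and leaves the case where the supremum is approached at the boundary unproved; the main argument does not rely on it, so this is not a gap in your proof.
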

	
	We have two remarks regarding this result. First, the bound essentially requires that each mapping $T_i$ is Lipschitz, and the uniform log-concavity of the marginals is a sufficient condition ensuring this property. This is equivalent to requiring that the Nataf transformation in \eqref{eq: Nataf transformation} is Lipschitz, for which $\norm{T}_{\Lip}=\max_i \norm{T_i}_{\Lip}$. Thus under this weaker condition, inequality \eqref{eq: classical Poincare inequality constant gaussian copula Lipschitz constant}, as well as similar inequalities beyond the Gaussian copula setting, follow directly from transport arguments. Second, if $\mu$ was uniformly log-concave, then a classical Poincar\'e inequality could be obtained directly using Caffarelli's contraction theorem applied to $\mu$, or Bakry-Emery criterion \eqref{eq: Bakry-Emery}. However, recall that our assumption of uniform log-concave marginals alone does not imply the log-concavity of the joint measure $\mu$. This raises the natural question of whether the additional assumption of having a Gaussian copula is sufficient to ensure such a property.
	This is not the case, as we show in the next example.
	
	Consider $\mu\in \R^2$ a probability measure with Gaussian copula $\cop_\Sigma$, where
	\[\Sigma=\begin{pmatrix} 1 & \ell \\ \ell & 1\end{pmatrix},\qquad \ell\in (-1,1).\]
	Given $\veps>0$, denote the function $h_\veps(x_i)=\sinh(x_i)+\veps\,x_i$ and assume that both marginals $\mu_i$ are given by
	\[\scaleobj{.92}{F_i(x_i)=\Phi\circ h_\veps(x_i),\quad \rho_i(x_i)=h_\veps'(x_i)\,\varphi\circ h_\veps(x_i)=\frac{1}{\sqrt{2\pi}}\left(\cosh(x_i)+\veps\right)\exp\left(-\frac{1}{2} \left(\sinh(x_i)+\veps\,x_i\right)^2 \right),}\]
	for all $x_i\in \R$. With this choice, each marginal $\mu_i$ is uniformly log-concave since its potential $V_i=-\log(\rho_i)$ satisfies
	\[V_i''(x_i) \geq 2\veps + \veps^2 > 0,\quad x_i \in \R.\]\\
	However, the joint measure $\mu$ is not log-concave for $\veps$ small enough. Indeed, using \eqref{eq: gaussian copula density} its density is given by
	\[
	\rho(x)
	= \frac{1}{2\pi\,\sqrt{1-\ell^2}}\, h_\veps'(x_1)h_\veps'(x_2)
	\exp\left(-\frac{1}{2(1-\ell^2)}(h_\veps(x_1)^2 + h_\veps(x_2)^2 - 2\,\ell\, h_\veps(x_1)h_\veps(x_2))\right),
	\]
	for all $x\in \R^2$, and a direct computation shows that the potential $V=-\log(\rho)$ is such that
	\[
	\det(\Hess (V)(0,0))
	= \left(\left(\frac{1}{1-\ell^2}-\frac{1}{(1+\veps)^3}\right)^2
	- \left(\frac{\ell}{1-\ell^2}\right)^2 \right)(1+\veps)^4.
	\]
	But this quantity is negative as soon as $(1+\veps)^3 < 1 + \abs{\ell}$, meaning that therefore
	the measure $\mu$ is not log-concave.
	
	\section{Application to Global Sensitivity Analysis}
	\label{section: Application to Global Sensitivity Analysis}
	\subsection{Sobol indices, DGSM, and their link via weighted Poincar\'e inequalities}
	In this section we apply weighted Poincar\'e inequalities to Global Sensitivity Analysis (GSA). To contextualize this connection, recall that the aim of GSA is to quantify the influence of input random variables $X=(X_1,\dots,X_d)$
	on the output of a function $f\colon \R^d \rightarrow \R$, which represents a 
	black-box model. The role of Poincar\'e inequalities in this applied domain is to provide a link between two commonly used sensitivity indices to quantify uncertainty: Sobol indices
	and Derivative-based Global Sensitivity Measures (DGSM).
	
	This connection has been extensively studied
	in the setting where the input variables are
	mutually independent, allowing the use of one-dimensional Poincar\'e inequalities. See for instance \cite{SobolKucherenko2009,lamboni,poincareintervals} for applications involving classical (unweighted) inequalities and \cite{Song,HerediaWeightPoincare} for more recent works using weighted Poincar\'e inequalities. Beyond the one-dimensional framework, the application of classical multidimensional Poincar\'e inequalities in GSA was introduced in the PhD thesis \cite{steiner}. This extension allows one to relax the assumption of independence between individual variables by considering instead that the inputs are given by independent random vectors. This setting, which can also be seen as having block-wise independent variables, is the one considered in the present work.

	To formalize this idea, consider $\parti =\{I_1,\dots,I_m\}$ a partition of the set $\{1,\dots,d\}$, where each $I_k$ is referred to as a block. For each block $I_k$, let $X_{I_k}=(X_i)_{i\in I_k}$ denote corresponding  sub-vector of inputs, with distributions $\mu_{I_k}\in \probClass{\Omega_{I_k}}$ defined on an open connected set $\Omega_{I_k}$ of class $\C^1$. Assume that these sub-vectors $X_{I_1},\dots,X_{I_m}$ are independent. For any collection of blocks $\I \subset \parti$, we denote $X_{\I}$ the concatenation of the vectors $(X_I)_{I \in \I}$. In particular the complete input vector is given by $X = X_{\parti}$.
	
	Let us introduce the devoted sensitivity indices for each $X_{I_k}$. First, consider total Sobol indices. They are defined through the Hoeffding-Sobol decomposition, which in the present block-wise independence setting takes the form
	\begin{equation*}
		f(X) = \sum_{\I \subset \mathcal{S}} f_{\I}(X_{\I})
	\end{equation*}
	(see \eg \cite{group_independent_GSA}).
	Each term $f_{\I}(X_{\I})$ is uniquely characterized by the property
	\begin{equation*}
		\mathbb{E}[f_{\I}(X_{\I}) \mid X_{\mathcal{J}}] = 0, \quad \text{for all } \mathcal{J} \subsetneq \I,
	\end{equation*}
	where by convention, $\mathbb{E}[\, \cdot \mid X_{\mathcal{J}}] = \mathbb{E}[\, \cdot \,]$ when $\mathcal{J} = \emptyset$. This property 
	leads to the following decomposition of the output variance:
	\begin{equation*}
		\Var(f(X)) = \sum_{\I \subset \parti} \Var(f_{\I}(X_{\I})).
	\end{equation*}
	Then the total Sobol index of each vector $X_{I_k}$ is obtained as the proportion of variance given by the elements of the decomposition where it appears, namely
	\begin{equation*}
		\SKtot = \frac{1}{\Var(f(X))} \sum_{\I \subset \parti, I_k \in \I} \Var(f_{\I}(X_{\I}))\in [0,1].
	\end{equation*}
	These indices can be seen as percentages measuring the degree of influence of each input vector. Despite their strong interpretability, total Sobol indices are numerically expensive to estimate. 
	
	When the derivatives of the model $f$ are available we can also use DGSM indices. We consider multivariate weighted versions of them. Namely, the weighted DGSM of $X_{I_k}$ with weight $W_{I_k}\in \weightClass{\Omega_{I_k}}$ is given by
	\[\DGSMk = \Esp\left[\norm{\nabla_{I_k}f(X)}_{W_{I_k}(X_{I_K})}^2\right],\quad \mbox{where}\quad \nabla_{I_k}f=\left(\frac{\partial f}{\partial x_i}\right)_{i\in I_k}.
	\]
	This definition generalizes both the classical DGSM, recovered when $W_{I_k}=\Id$, and the weighted DGSM for individual input variables considered in \cite{Song,HerediaWeightPoincare}, which is restricted to one-dimensional weights.
	
	DGSM indices are rarely used on their own to quantify the influence of input variables. Instead, they are typically used for screening purposes, in order to detect non-influential inputs. This is possible because they provide cost-effective upper bounds on total Sobol indices.
	These bounds, obtained as a direct consequence of weighted Poincar\'e inequalities, are stated formally in the following proposition. Its proof is a straightforward adaptation of the one in \cite{HerediaWeightPoincare}. 
	\begin{prop}
		\label{prop: upper bound Sobol DGSM}
		Under the same notation and assumptions above, assume that the distribution of the random vector $X_{I_k}$ satisfies a weighted Poincar\'e inequality with weight $\Weight_{K_I}\in \weightClass{\Omega_{I_k}}$.
		Then we have the inequality
		\begin{equation}
			\label{eq: prop upper boundd}
			\SKtot\leq C_P(\mu_{I_k},\Weight_{I_k}) \, \frac{\DGSMk}{\Var(f(X))}.
		\end{equation}
	\end{prop}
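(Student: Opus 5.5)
The plan is the standard block-wise argument: the total Sobol numerator is an expected conditional variance, and the weighted Poincar\'e inequality for $\mu_{I_k}$ is applied to that conditional variance. Write $X=(X_{I_k},X_{\sim k})$, where $X_{\sim k}$ is the concatenation of the remaining blocks $X_{\parti\setminus\{I_k\}}$. The first step is the identity
\[
\sum_{\I\subset\parti,\, I_k\in\I}\Var(f_\I(X_\I))=\Esp\bigl[\Var(f(X)\mid X_{\sim k})\bigr],
\]
so that $\SKtot\,\Var(f(X))=\Esp[\Var(f(X)\mid X_{\sim k})]$.

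To establish this identity, I would use the block-wise Hoeffding--Sobol decomposition recalled above. The components $f_\I$ are pairwise orthogonal in $L^2$, by the characterizing property $\Esp[f_\I\mid X_{\mathcal J}]=0$ for $\mathcal J\subsetneq\I$ together with the independence of the blocks. In addition, $\Esp[f(X)\mid X_{\sim k}]=\sum_{\I\not\ni I_k}f_\I(X_\I)$, because any term containing $I_k$ has zero conditional expectation given the other blocks. Hence
\[
f(X)-\Esp[f(X)\mid X_{\sim k}]=\sum_{\I\ni I_k}f_\I(X_\I).
\]
Taking second moments and using orthogonality gives the identity. This step is classical (see e.g. \cite{group_independent_GSA}) and I would cite it rather than redo it.

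The second step fixes a value $x_{\sim k}$ of the other blocks and considers $g(y)=f(y,x_{\sim k})$ for $y\in\Omega_{I_k}$. Since $X_{I_k}$ is independent of $X_{\sim k}$, the conditional law of $X_{I_k}$ given $X_{\sim k}=x_{\sim k}$ is $\mu_{I_k}$. The conditional variance is therefore $\Var_{\mu_{I_k}}(g)$, and the assumed weighted Poincar\'e inequality gives
\[
\Var(f(X)\mid X_{\sim k}=x_{\sim k})\le C_P(\mu_{I_k},\Weight_{I_k})\int_{\Omega_{I_k}}\norm{\nabla_{I_k}f(y,x_{\sim k})}^2_{\Weight_{I_k}(y)}\,d\mu_{I_k}(y).
\]
Integrating this over the law of $X_{\sim k}$ and using Fubini, which is justified by the independence of the blocks, turns the right-hand side into $C_P(\mu_{I_k},\Weight_{I_k})\,\DGSMk$. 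Dividing by $\Var(f(X))$ then yields \eqref{eq: prop upper boundd}.

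The only delicate point is the admissibility of the sections $g$. We may assume $\DGSMk<\infty$, since otherwise there is nothing to prove. Fubini then gives that the right-hand side above is finite for almost every $x_{\sim k}$, and $f(X)\in L^2$ gives $g\in L^2(\mu_{I_k})$ for almost every $x_{\sim k}$. These sections are therefore admissible in the definition of the weighted Poincar\'e inequality. The one remaining detail is that the weak gradient of the section coincides with the partial weak gradient $\nabla_{I_k}f$ almost everywhere, which holds for Sobolev functions on product domains (for smooth $f$ it is immediate). I expect no real obstacle beyond this measure-theoretic bookkeeping, so the argument mirrors the one-dimensional case in \cite{HerediaWeightPoincare} with the scalar weight replaced by a matrix weight.
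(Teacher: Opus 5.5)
Your proposal is correct and follows essentially the same route as the paper, which defers to the one-dimensional argument of \cite{HerediaWeightPoincare}. That argument writes $\SKtot\,\Var(f(X))$ as $\Esp[\Var(f(X)\mid X_{\sim k})]$ through the block-wise Hoeffding--Sobol decomposition, applies the weighted Poincar\'e inequality for $\mu_{I_k}$ to the sections of $f$ with the other blocks frozen, and integrates using independence, exactly as you do, with your admissibility and Fubini bookkeeping being more explicit than what the paper spells out.
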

	This inequality provides a practical screening criterion: when estimations of the right-hand side in \eqref{eq: prop upper boundd} falls below a small threshold, $X_{I_k}$ can be identified as not influential without having to compute $\SKtot$. The variance of $f(X)$ and $\DGSMk$ can be estimated efficiently employing well-established techniques. For instance, Monte Carlo integration, using the available model and gradient evaluations,
	already provides reasonable approximations.
	\\
	Regarding the estimation of the optimal Poincar\'e constant $C_P(\mu_{I_k}\Weight_{I_k})$, 
	one possible approach is to adapt the finite element-based method proposed in \cite{steiner}, originally designed for the unweighted case $\Weight_{I_k}=\Id$, by incorporating weights. Nevertheless, this approach becomes computationally demanding as the dimension increases, and is specific to each probability measure as one has to deal with the geometry of the domain $\Omega_{I_k}$. In our numerical applications, however, such estimation is not required, as we use weighted Poincar\'e inequalities with explicit optimal constants.

	\subsection{Numerical application}
	We present numerical experiments illustrating the performance of the DGSM-based upper bounds on total Sobol indices in Proposition \ref{prop: upper bound Sobol DGSM}. To this end, we consider a dyke-flood toy model, a standard benchmark in the literature for testing GSA methodologies. It was also considered, for instance, in \cite{poincareintervals,steiner,HerediaWeightPoincare}. The outputs of interest in this model are the maximal annual overflow (measured in meters)
	\begin{equation*}
		S=Z_v-H_d-C_b+\left(\frac{Q}{B K_s}\sqrt{\frac{L}{Z_m-Z_v}}\right)^{\frac{3}{5}},
	\end{equation*}
	and the annual cost of maintenance of a dyke built next to it (measured in million of euros)
	\begin{equation*}
		C=\mathbbm{1}_{S>0}+\left(0.2+0.8\left(1-e^{-\frac{1000}{S^4}}\right)\right)\mathbbm{1}_{S\leq 0}+\frac{1}{20}\max\left\{ H_d,8\right \}.
	\end{equation*}
	The $d=8$ input variables appearing in both expressions are assumed to be block-wise independent. Both the block structure and the marginal distribution, as specified in Table \ref{table: flood model variables}, are consistent with the physical meanings of the variables. We preserve the same block structure as in \cite{steiner}, where the upper bounds \eqref{eq: prop upper boundd} were applied in the context of classical Poincar\'e inequalities, and under the assumption that the copulas of the paired  input variables are Gaussian.
	\begin{table}[ht]
		\centering
		\bgroup
		\def\arraystretch{1.3}
		\begin{tabular}{cllll}
			\hline
			Block & Input & Meaning & Unit & Probability measure \\ \hline 
			\multirow{2}{*}{$I_1=\{1,2\}$} & $X_1=Q$   & Max. flow rate & $m^3/s$ & Gumbel $\mathcal{G}(1013, 558)|_{[500, 3000]}$ \\  
			& $X_2=K_s$ & Strickler coeff. & --- & Normal $\normal(30,64)|_{[15,75]}$ \\ \hline
			\multirow{2}{*}{$I_2=\{3,4\}$} & $X_3=Z_v$ & Downstream level & $m$ & Triangular $\mathcal{T}(49, 50, 51)$\\ 
			& $X_4=Z_m$ & Upstream level & $m$ & Triangular $\mathcal{T}(54, 55, 56)$ \\ \hline
			$I_3=\{5\}$                  & $X_5=H_d$ & Dyke height & $m$ & Uniform $\mathcal{U}(7,9)$\\ \hline
			$I_4=\{6\}$                  & $X_6=C_b$ & Bank height & $m$ & Triangular $\mathcal{T}(55, 55.5, 56)$ \\ \hline
			\multirow{2}{*}{$I_5=\{7,8\}$} & $X_7=L$   & River length & $m$ &  Triangular $\mathcal{T}(4990, 5000, 5010)$ \\ 
			& $X_8=B$   & River width & $m$ & Triangular $\mathcal{T}(295, 300, 305)$ \\ \hline
		\end{tabular}
		\egroup
		\caption{Input variables of the flood model and their associated block structure. The notation $|_I$ means that the distribution is truncated on the set $I$.}
		\label{table: flood model variables}
	\end{table}\\
	The symbols $\mathcal{G}(\eta,\beta)$ ($\eta\in \R$, $\beta>0$) and $\mathcal{T}(a,c,b)$ ($a<c<b$) refer respectively to the Gumbel and triangular distribution. Their marginals are given by:
	\begin{equation*}
		\rho(x)=\frac{1}{\beta}\exp\left(-\frac{x-\eta}{\beta}-\exp\left(-\frac{x-\eta}{\beta}\right)\right),\qquad x\in \R,
	\end{equation*}
	and
	\begin{equation*}
		\rho(x)=\frac{2(x-a)}{(b-a)(c-a)}\mathbbm{1}_{[a,c]}(x)+\frac{2(b-x)}{(b-a)(b-c)}\mathbbm{1}_{(c,b]}(x),\qquad x\in \R.
	\end{equation*}
	Regarding the structure of dependence of the coupled variables, we consider two scenarios in which all vectors are modeled using copulas of the same type, selected among those for which weighted Poincar\'e inequalities are available:
	\begin{itemize}
		\item We consider a setting where dependence is fully modeled by Gaussian copulas $\cop_\Sigma$. Since each coupled vector is two-dimensional, the associated matrices $\Sigma$ take the form 
		$$\Sigma = \begin{pmatrix} 1 & \ell \\ \ell & 1 \end{pmatrix},\qquad \ell\in (-1,1).$$ 
		The parameters $\ell$, specified in Table \ref{tab:gaussian_copula}, are the same as those used in \cite{steiner}. The table also includes the associated Kendall rank correlation coefficient $\tau$ (simply called Kendall’s Tau), which, in the case of elliptical copulas, depends only on $\ell$ and is given by $\tau(\ell) = \frac{2}{\pi} \arcsin(\ell)$ (see \cite{tau_elliptical_copulas}). Recall that in general $\tau$ measures the dependence between marginals in terms of concordance: values close to $1$ (resp. to $-1$) indicate strong concordance (resp. discordance). In this case, the input vectors in the model present a moderate level of concordance.
		\begin{table}[ht]
			\centering
			\begin{tabular}{ccc}
				\hline
				Couple & Parameter $\ell$ &  $\ell$ \\
				\hline
				$X_{I_1}=(Q,K_s)$ & 0.5 & $\approx 0.33$ \\
				$X_{I_2}=(Z_v,Z_m)$ & 0.3 & $\approx 0.19$ \\
				$X_{I_6}=(L,B)$ & 0.3 & $\approx 0.19$ \\
				\hline
			\end{tabular}
			\caption{Gaussian copula parameters and corresponding Kendall's Tau.}
			\label{tab:gaussian_copula}
		\end{table}
		
		In this setting, for the upper bounds on total Sobol indices in \eqref{eq: prop upper boundd} we use the weighted Poincar\'e inequalities given in \eqref{eq: weighted Poincare inequality gaussian copula}. Recall that these remain valid for variables that are not grouped with other ones, which in this case are $X_3=H_d$ and $X_6=C_b$ (see \eqref{eq: copula weighted Poincare inequality 1D}). We refer to the corresponding upper bounds as \textit{weighted upper bounds}.\\
		For comparison purposes, we also replicate the computations of the \textit{Classical upper bounds} presented in \cite{steiner}, \ie, those using classical Poincar\'e inequalities.
		\item We consider a second setting where the dependence structure is encoded by Clayton copulas $\cop_\theta$. The parameters $\theta>0$ are specified in Table \ref{tab:clayton_copula}, together with their corresponding values of Kendall’s Tau, which is given in this case by 
		$\tau(\theta)=\theta/(\theta+2)$ (see \cite{copulas_harry}). The parameters are chosen so that these values are comparable to those of the Gaussian copula setting.
		\begin{table}[ht]
			\centering
			\begin{tabular}{ccc}
				\hline
				Couple & Parameter $\theta$ & $\tau(\theta)$ \\
				\hline
				$X_{I_1}=(Q,K_s)$ & 1.0 & $0.33$ \\
				$X_{I_2}=(Z_v,Z_m)$ & 0.5 & $0.20$ \\
				$X_{I_6}=(L,B)$ & 0.5 & $0.20$ \\
				\hline
			\end{tabular}
			\caption{Clayton copula parameters.}
			\label{tab:clayton_copula}
		\end{table}
		
		In this case we only compute \textit{weighted upper bounds}, given by the weighted Poincar\'e inequality in \eqref{eq: weighted Poincare inequality clayton copula}. Recall that the latter is based on the inequality  \eqref{eq: Poincare inequality inverted Dirichlet} for the inverted Dirichlet distribution. As previously mentioned, one could also rely on the alternative inequality for this measure given in \eqref{eq: Poincare inequality inverted Dirichlet not used}. However, since its Poincar\'e constant is not optimal, the accuracy of the numerical results would not be guaranteed.
	\end{itemize}
	
	Before presenting our results, we briefly discuss the numerical details. The implementation was carried out using the R statistical software \cite{R}. \medskip 
	
	\noindent \textit{Model scaling}
	\hfill\\
	To ensure numerical stability, all the input variables were standardized according to their parameters; for instance, the truncated normal variable $K_s$ was scaled by its mean and variance. The outputs $S$ and $C$ have been scaled accordingly. Since this scaling involves only monotonic transformations of the marginals, the copulas of the paired variables do not change (see \cite{Nelsen_copulas}). The total Sobol indices and their upper bounds also remain unchanged.
	\medskip 
	
	\noindent \textit{Numerical aspects of the Poincar\'e inequalities}
	\hfill\\
	As mentioned, the weighted upper bounds that we implement are based on the Poincar\'e inequalities in \eqref{eq: weighted Poincare inequality gaussian copula} and \eqref{eq: weighted Poincare inequality clayton copula}. As such, their computation only requires the densities $\rho_i$, the cdf $F_i$, and the quantile function $\Phi^{-1}$ of the standard normal distribution, appearing in the inequalities. All these functions are available in R. \\
	For the classical upper bounds in the Gaussian copula setting, we require numerical estimations of the optimal constants $C_P(\mu_{I_k})$ of the classical Poincar\'e inequalities. These are computed using a finite elements discretization. See \cite{poincareintervals} for the method in the one-dimensional case and \cite{steiner} for the multidimensional setting, including a special treatment for distributions with Gaussian copulas. 
	
	\medskip 
	\noindent \textit{Monte Carlo estimation and sampling}
	\hfill\\
	The corresponding weighted DGSM and the variance term in the upper bound \eqref{eq: prop upper boundd} were estimated via Monte Carlo integration. We employed a sample of size $10*d=80$ of the input variables, together with the corresponding model and gradient evaluations. These gradient evaluations are in fact approximations using finite differences.
	\\
	The chosen sampling method of the input vectors depends on the copula. For distributions with a Gaussian copula $\cop_\Sigma$, the samples were obtained by generating normal random vectors with covariance matrix $\Sigma$ and applying the Nataf transformation \eqref{eq: Nataf transformation}. For the Clayton copula setting, samples were generated using the Marshall-Olkin algorithm for Archimedian copulas (see \eg \cite{copula_simulation}).
	\\
	We perform $100$ replicates for each estimation of the upper bounds, using newly generated samples at each run. They are displayed with boxplots to represent confidence intervals. In addition, to evaluate the accuracy of the upper bounds, the values of the total Sobol indices are required. Since analytical expressions are not available, we estimate them using the function {\fontfamily{qcr}\selectfont soboljansen} from the package {\fontfamily{qcr}\selectfont sensitivity} \cite{R_sensitivity}, through a sample of size 40.000. These estimations are taken as the ``true'' values.
	
	\medskip
	We first present the results for the Gaussian copula setting. They are shown in Figure \ref{fig: results Gaussian copula setting}, for the two outputs $S$ and $C$ simultaneously. 
	We can observe that for each block of variables and each model output, the weighted upper bounds improve the classical ones. Moreover, excepting for the individual variable $H_d$ in the cost output $C$, these bounds are very accurate.\\ A possible explanation for this outcome could be given in terms of the model's main effects, which are functions representing the model's behavior with respect to each input vector. As discussed in \cite{HerediaWeightPoincare} in the context of one-dimensional Poincar\'e inequalities, the accuracy of the upper bounds is related to the proximity of the main effects to the functions that attain equality in the associated Poincar\'e inequalities. In the one-dimensional setting, these extremal functions are relatively easy to identify, since they are characterized as the only strictly monotone eigenfunctions of the operators associated with the inequalities. However, we do not explore this direction in the present paper, since no analogous characterisation is known in the multidimensional setting.
	\begin{figure}[h!t]
		\centering
		\begin{subfigure}{.48\textwidth}
			\centering
			\includegraphics[width=1\linewidth]{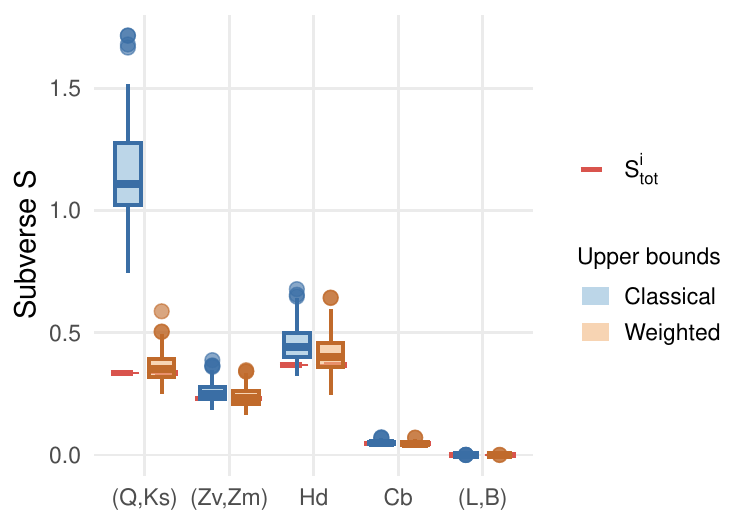}
		\end{subfigure}
		\begin{subfigure}{.48\textwidth}
			\centering
			\includegraphics[width=1\linewidth]{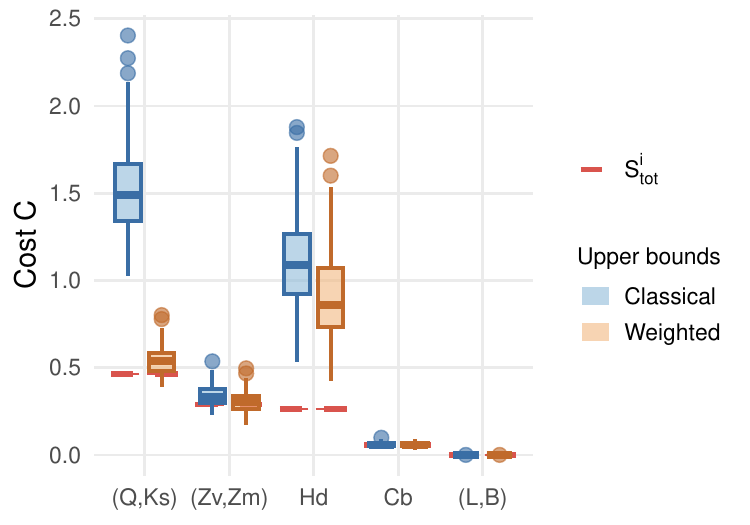}
		\end{subfigure}
		\caption{Upper bounds on the total Sobol indices for the flood model in the Gaussian copula setting. Horizontal bars indicate the true values.}
		\label{fig: results Gaussian copula setting}
	\end{figure}
	
	Next, Figure \ref{fig: results Clayton copula setting} presents the results for the Clayton copula setting. Most of the upper bounds are accurate, with the exception of those for the variables $(Q,K_s)$ in both outputs and, as in the previous case, for $H_d$ in the cost output $C$. The presence of outliers in the boxplots is due to the terms $T_i^{-1}(x_i)=F_i(x_i)^{-\theta}-1$ in the weighted Poincar\'e inequality \eqref{eq: weighted Poincare inequality clayton copula}, which become very large when $F_i(x_i)$ takes values close to zero.
	
	Better estimations of total Sobol indices can be obtained using more sophisticated techniques, still relying on information on the gradient. Staying within the scope of Poincar\'e inequalities, a natural next step for this work would be to consider Poincar\'e chaos expansions. Roughly speaking, the current upper bounds only use a small portion of the so-called Poincar\'e basis associated with the inequalities, whereas chaos expansions involve the full basis when it exists. For a detailed discussion on this approach in the context of mutually independent input variables, see \cite{gradient_enhanced,PoincareChaos}.
	
	\begin{figure}[h!t]
		\centering
		\begin{subfigure}{.48\textwidth}
			\centering
			\includegraphics[width=1\linewidth]{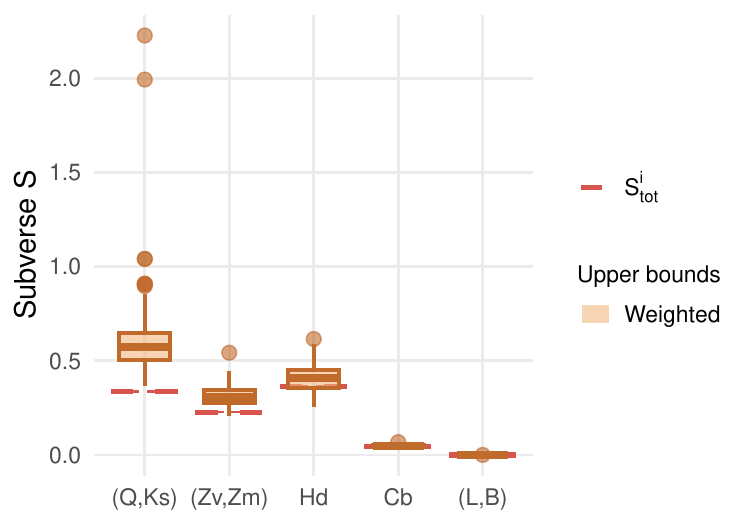}
		\end{subfigure}
		\begin{subfigure}{.48\textwidth}
			\centering
			\includegraphics[width=1\linewidth]{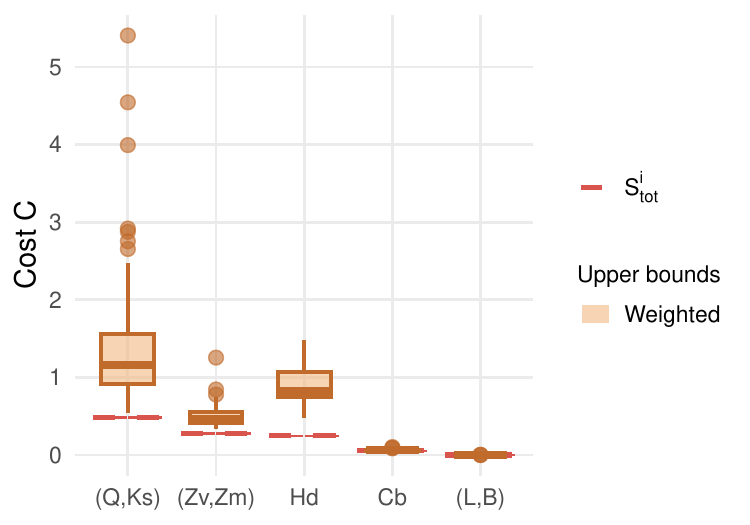}
		\end{subfigure}
		\caption{Upper bounds on the total Sobol indices for the flood model in the Clayton copula setting. Horizontal bars indicate the true values.}
		\label{fig: results Clayton copula setting}
	\end{figure}
	
	\appendix
	\section{Weighted Poincar\'e inequalities for one-dimensional probability measures}
	
	In this appendix we estimate the optimal Poincar\'e constant for two one-dimensional probability measures, namely the Beta distribution and a logistic-type distribution. We establish upper bounds on the Poincar\'e constant using the intertwining approach introduced in \cite{bj} and lower bounds with the Rayleigh quotient characterization of the optimal constant. 
	Let us recall this strategy briefly. Additional examples of application can be found in \cite{bj,bjm,HerediaWeightPoincare}.
	
	Let $\mu\in \probClass{I}$ be a one-dimensional probability measure defined on an open interval $I$ and $w\in \weightClass{I}$ be a weight. Consider the  (self-adjoint extension of the) diffusion operator defined for smooth functions $f$ as 
	\[L f := \frac{1}{\rho}(w\,f'\,\rho)'.\]
	There are several characterizations of the optimal Poincar\'e constant $C_P(\mu,w)$ and ways to bound it in terms of this operator. For instance recall the intertwining result in \cite{bj}. Given a function $g$ such that $\abs{g'}>0$ on $I$, define
	\begin{equation}
		\label{eq: intertwining function M}
		M_g=\frac{(-Lg)'}{g'}.
	\end{equation}
	If $g$ is such that $M_g$ is bounded from below by a positive constant on $I$, then $\mu$ satisfies the following weighted Poincar\'e inequalities:
	\[\Var_\mu(f)\leq \int_{I}\frac{w}{M_g} (f')^2 \,d\mu\leq \frac{1}{\inf_I \,M_g} \int_{I}w\, (f')^2 \,d\mu.\]
	In particular, the constant $C_P(\mu,w)$ is bounded from above by
	\begin{equation}
		\label{eq: intertwining inequality}
		C_P(\mu,w)\leq \frac{1}{\inf_I M_g}.
	\end{equation}
	Hoping to obtain a sharp bound, in practice one typically selects a parameterized family of functions for $g$ and then optimizes the right-hand side or \eqref{eq: intertwining inequality} with respect to the corresponding family of parameters.
	
	Regarding lower bounds, we use the following characterization of $C_P(\mu,w)$, which follows directly from definition of the optimal Poincar\'e constant:
	\begin{equation}
		\label{eq: Rayleigh quotient Poincare}
		C_P(\mu,w)=\sup\left\{ \frac{\Var_\mu(f)}{\int_I w\,(f')^2\, d\mu} \, \big| \, f\in L^2(\mu),\; 0<\int_I w\,(f')^2\,d\mu<+\infty \right\}.
	\end{equation}
	\mathTitle{Beta distribution}
	Consider the beta distribution $\Pi_\beta$ with vector parameter $\beta=(\beta_1,\beta_2)$, where $\beta_1,\beta_2>0$. Its density is defined for all $t\in (0,1)$ as $\rho(t)=Z_{\beta_1,\beta_2}^{-1}\, t^{\beta_1-1}(1-t)^{\beta_2-1}$, where  $Z_{\beta_1,\beta_2}=\frac{\Gamma(\beta_1)\,\Gamma(\beta_2)}{\Gamma(\beta_1+\beta_2)}$. 
	Considering the weight $w_\tau(t)=t^2(1-t)^2$,
	here we show that $C_P(\Pi_\beta,w_\tau)$ is bounded from above by
	\begin{equation*}
		\Phi(\beta_1, \beta_2) = \begin{cases}
			\dfrac{4}{\min\left(\beta_1^2,\beta_2^2\right)}, & \text{if } \min(\beta_1, \beta_2) \leq 1, \\[12pt]
			\dfrac{4}{\min(2\beta_1 - 1,2\beta_2 - 1)}, & \text{if } \min(\beta_1, \beta_2) \geq 1,
		\end{cases}
	\end{equation*}
	and bounded from below by $4/\min(\beta_1^2,\beta_2^2)$. Note then that the bounds are sharp when $\min(\beta_1,\beta_2)\leq 1$, meaning that $C_P(\Pi_\beta,w_\tau)=4/\min(\beta_1^2,\beta_2^2)$ in this regime.
	
	First we establish the upper bound.
	Consider a function $g$ such that $ g'(t)=t^{\veps_1}(1-t)^{\veps_2}$, $t\in (0,1)$, with $\veps_1,\veps_2\in \R$. 
	After some computations we obtain the associated function $M_g$ in \eqref{eq: intertwining function M}, here given for all $t\in (0,1)$ by
	\begin{multline*}
		M_{g}(t)=-(\veps_1+1)(\beta_1+\veps_1+1)(1-t)^2\\+\left[(\beta_1+\veps_1+1)(\veps_2+2)+(\beta_2+\veps_2+1)(\veps_1+2)\right]t(1-t)-(\veps_2+1)(\beta_2+\veps_2+1)t^2.
	\end{multline*}
	Since $M_g$ is a degree two polynomial, 
	its minimum depends on the sign of its leading coefficient. After expanding into the standard form $M(t)=A\,t^2+B\,t+C$, this coefficient is found to be
	\begin{align*}
		A			&=-(\veps_1+\veps_2+3)(\beta_1+\beta_2+\veps_1+\veps_2+2). 
	\end{align*}
	As soon as $A\leq 0$, the minimum of $M_g$ is reached at $\{0,1\}$ and it is thus given by
	\begin{equation}
		\label{eq: intertwining minimum beta function M}
		\min_{[0,1]} M_g = \min(M_g(0),M_g(1))=\min(-(\veps_1+1)(\beta_1+\veps_1+1),-(\veps_2+1)(\beta_2+\veps_2+1)).
	\end{equation}
	Seeking this quantity to be positive,
	we impose the additional assumptions
	\[(\veps_1+1)(\beta_1+\veps_1+1)<0,\quad (\veps_2+1)(\beta_2+\veps_2+1)<0.\]
	Let us explore scenarios with specific choices of $\veps_1$ and $\veps_2$.
	\begin{enumerate}
		\item First, if we take $\veps_1=\veps_2=-\frac{3}{2}$, then $A=0$ and from \eqref{eq: intertwining minimum beta function M} and \eqref{eq: intertwining inequality} it follows that
		\[C_P(\Pi_\beta,w_\tau)\leq 
		\frac{4}{\min\left(2\beta_1-1,2\beta_2-1\right)},\]
		whenever $\beta_1,\beta_2>\frac{1}{2}$.
		\item Then we consider $\veps_1=-\beta_1/2-1$ and $\veps_2=-\beta_2/2-1$. Thus the condition $A\leq 0$ is equivalent to $\beta_1+\beta_2\leq 2$. Under this constraint we obtain
		\[C_P(\Pi_\beta,w_\tau)\leq  \frac{4}{\min(\beta_1^2,\beta_2^2)}.\]
		\item Selecting the parameters $\varepsilon_1=-\beta_1/2-1$ and $\varepsilon_2=-\frac{3}{2}$, we have $A=\frac{1}{4}(\beta_1-1)\left(\beta_1+2\beta_2-1\right)$ and 
		\[C_P(\Pi_\beta,w_\tau)\leq \frac{4}{\min(\beta_1^2,2\beta_2-1)},\]
		under the conditions $\beta_2>1/2$ and $A\leq 0$, where the latter simplifies to $\beta_1\leq 1$.
		
		Exchanging the roles of $\veps_1$ and $\veps_2$, we also deduce that when $\beta_2\leq 1$,
		\[C_P(\Pi_\beta,w_\tau)\leq \frac{4}{\min(2\beta_1-1,\beta_2^2)}.\]
	\end{enumerate}
	Note that 
	if $\beta_1$ and $\beta_2$ are such that $\beta_1,\beta_2>1/2$ and $\beta_1+\beta_2\leq 2$, we have obtained different upper bounds (given in the first and the second cases above). We preserve the smallest one, being $4/\min(\beta_1^2,\beta_2^2)$. Additionally, note that if $\beta_1\leq 1<\beta_2$ (or if $\beta_2\leq 1<\beta_1$), we fall in the third case and the corresponding bound can also be written as
	\[\frac{4}{\min(\beta_1^2,2\beta_2-1)}=\frac{4}{\beta_1^2}=\frac{4}{\min(\beta_1^2,\beta_2^2)}.\]
	In summary, we have proven $C_P(\Pi_\beta,w_\tau)\leq \Phi(\beta_1,\beta_2)$.
	
	We have not explored all possible combinations of $\veps_1$ and $\veps_2$ in order to determine the smallest upper bound, since the computations involved are not straightforward. Our particular choices of $\veps_1$ and $\veps_2$ are inspired by the analysis in \cite{HerediaWeightPoincare} that led to the identification of the optimal Poincar\'e constant $C_P(\Pi_\beta,w_\tau)$ in the symmetric case $\beta_1=\beta_2$. In our non-symmetric case, the bound obtained also coincides with the optimal constant in the regime $\min(\beta_1,\beta_2)\leq 1$, for which we then have $C_P(\Pi_\beta,w_\tau)=4/\min(\beta_1^2,\beta_2^2)$.	Indeed, consider the function given by $ h(t)=t^\eta$ for all $t\in (0,1)$, with $\eta>-\beta_1/2$, so that $h\in L^2(\Pi_\beta)$ and $\int_0^1 w_\tau \,(h')^2\,d\Pi_\beta<+\infty$. We compute the Rayleigh quotient:
	\begin{equation*}
		\frac{\Var_{\Pi_\beta}(h)}{
			\int_{0}^1 w_\tau\;\left(h'\right)^2\,d\Pi_\beta }=\frac{1}{\eta^2}
		\frac{ Z_{\beta_1+2\eta,\beta_2}-\frac{1}{ Z_{\beta_1,\beta_2}} Z_{\beta_1+\eta,\beta_2}^2 }{ Z_{\beta_1+2\eta,\beta_2+2}}.
	\end{equation*}
	Using properties of the Gamma function, we have
	\[Z_{\beta_1+2\eta,\beta_2+2}=R(\beta_1,\beta_2,\eta)\, Z_{\beta_1+2\eta,\beta_2},\quad \mbox{where}\quad R(\beta_1,\beta_2,\eta)=\frac{\beta_2}{\beta_1+2\eta+\beta_2}\,\times\,\frac{\beta_2+1}{\beta_1+2\eta+\beta_2+1}.\]
	Then
	\[\frac{\Var_{\Pi_\beta}(h)}{
		\int_{0}^1 w_\tau\left(h'\right)^2\,d\Pi_\beta }=\frac{1}{\eta^2\,R(\beta_1,\beta_2,\eta)}
	\,	\left(1-\frac{Z_{\beta_1+\eta,\beta_2}^2}{Z_{\beta_1+2\eta,\beta_2}\times Z_{\beta_1,\beta_2}}\right)\]
	The term $Z_{\beta_1+2\eta,\beta_2}$ in the denominator tends to infinity as $\eta\rightarrow -\beta_1/2$ and $R(\beta_1,\beta_2,\eta)$ tends to one. Hence, due to the characterization of the Poincar\'e constant in \eqref{eq: Rayleigh quotient Poincare}, after taking the limit $\eta\rightarrow -\beta_1/2$ we obtain $C_P(\Pi_\beta,w_\tau)\geq 4/\beta_1^2$. Analogous computations using the function $t\mapsto h_\eta(t)=(1-t)^\eta$ with $\eta\rightarrow -\beta_2/2$ lead to $C_P(\Pi_\beta,w_\tau)\geq 4/\beta_2^2$, therefore that $C_P(\Pi_\beta,w_\tau)\geq 4/\min(\beta_1^2,\beta_2^2)$.
	
	\mathTitle{A logistic-type distribution}
	This part is devoted to estimating the optimal Poincar\'e for the logistic-type distribution $\muR\in \probClass{\Rplus}$ having density function
	\[r\in\Rplus\mapsto \rho(r)=2\,c_{a}^{-1}\,r^{2a-1}e^{-r^2}(1+e^{-r^2})^{-2},\quad \mbox{where }c_a=\int_{\Rplus} r^{a-1}e^{-r}(1+e^{-r})^{-2}\,dr,\; a>0,\]
	with weight $\wR(r)=r^2$. To this end, we first consider the auxiliary measure $\mutild$ with density
	\[\rhotild(r)=c_{a}^{-1}\,r^{a-1}e^{-r}(1+e^{-r})^{-2},\quad r\in \Rplus,\]
	and provide estimations of $C_P(\mutild,\wR)$. Then bounds for $C_P(\muR,\wR)$ are recovered via a transport argument.
	
	Let us show that $C_P(\mutild,\wR)$ is bounded from above by
	\[\phi(a)=\begin{cases}
		\dfrac{4}{a^2},&\mbox{if }a\leq 2, \\[8pt]
		\dfrac{1}{a-1}, & \mbox{if }a\geq 2,
	\end{cases}\]
	and from below by $4/a^2$. In particular the bounds are tight for $a\leq 2$, in which case we have $C_P(\mutild,\wR)=4/a^2$. Indeed, consider a function $g$ such that $g'(r)=r^\veps$ for all $r\in \Rplus$, with $\veps\in \R$. One can then check that the associated function $M_g$ in \eqref{eq: intertwining function M} can be written as
	\[M_g(r)=-(\veps+a+1)(\veps+1)+(\veps+2)\,r\left(2\frac{e^{r}}{1+e^{r}}-1\right)+2\,r^2\frac{e^r}{1+e^r}\left(1-\frac{e^r}{1+e^r}\right),\quad r\in\Rplus.\]
	The terms inside the parentheses are non-negative. Therefore, as soon as $\veps\geq -2$, the infimum of $M_g$ is reached at zero, in which case we obtain
	\[\inf_{\Rplus} M_g=-(\veps+a+1)(\veps+1).\]
	Maximizing this expression with respect to $\veps$ yields the optimal parameter
	$\veps^*=-\frac{1}{2}a-1$, with associated value $\inf_{\Rplus} M_g=a^2/4$. However, this is true only under the condition $a\leq 2$ ensuring $\veps^*\geq -2$. Otherwise, if $a>2$, the best parameter under the constraint $\veps\geq -2$ is $\veps^*=-2$, leading to $\inf_{\Rplus} M_g=a-1$. Summarizing these computations along with \eqref{eq: intertwining inequality} we conclude that $C_P(\mutild,\wR)\leq \phi(a)$.
	
	To obtain the lower bound we consider the function $r \mapsto h(r)=r^\eta$, with $\eta>-a/2$ so that $f\in L^2(\mutild)$ and $\int_{\Rplus} \wR \, (h')^2 \, d\mutild<\infty$. We compute
	\begin{equation*}
		\frac{\Var_{\mutild}(h)}{
			\int_{\Rplus} \wR\left(h'\right)^2\,d\mutild }=\frac{1}{\eta^2}
		\frac{ c_{2\eta+a}-\frac{1}{c_a} c_{\eta+a}^2 }{ c_{2\eta+a} }=\frac{1}{\eta^2}\left(1-\dfrac{c_{a+\eta}^2}{
			c_{2\eta+a}\times c_{a}}\right).
	\end{equation*}
	The term $c_{2\eta+a}$ diverges as $\eta\rightarrow -a/2$. Therefore due to \eqref{eq: Rayleigh quotient Poincare} taking the limit entails $C_P(\mutild,\wR)\geq 4/a^2$.
	
	Finally, we return to the measure $\muR$, which is the image of $\mutild$ under the mapping $r\mapsto T(r)=r^{\frac{1}{2}}$. Indeed, for every measurable, non-negative or bounded function $f\colon \Rplus\rightarrow \R$ we have
	\begin{multline*}
		\int_{\Rplus} f\circ T \,d\mutild=c_a^{-1}\int_{\Rplus} f(r^{\frac{1}{2}})\,r^{a-1} \frac{e^{-r}}{(1+e^{-r})^2}\,dr\\=2\,c_a^{-1}\int_{\Rplus} f(r)\,r^{2a-1} \frac{e^{-r^2}}{(1+e^{-r^2})^2}\,dr=\int_{\Rplus }f\,d\muR.
	\end{multline*}
	By the transport argument in \eqref{eq: weighted Poincare inequality by transport}, it follows that $\muR$ satisfies a weighted Poincar\'e inequality with weight $r\mapsto (\wR \times (T')^2)\circ T^{-1}(r)=r^2/4=\wR(r)/4$, preserving the optimal constant $C_P(\mutild,\wR)$. In other words, it satisfies
	\[\frac{1}{a^2}\leq C_P(\muR,\wR) \leq \frac{\phi(a)}{4}.\]
	
	\section*{Acknowledgement}
	I would like to express my sincere gratitude to my PhD advisor, Ald\'eric Joulin, for his invaluable guidance and suggestions, which greatly improved the presentation of this manuscript. I also thank Cl\'ement Steiner for the code that served as a starting point for the numerical applications.
	This work has been (partially) supported by the Project CONVIVIALITY ANR-23-CE40-0003 and DySLos ANR-25-CE40-6875-01 of the French National Research Agency.

	\bibliographystyle{plain}
	\bibliography{bibliography}

\begin{thebibliography}{10}

\bibitem{nataf_optimal}
A.~Alfonsi and B.~Jourdain.
\newblock A remark on the optimal transport between two probability measures
  sharing the same copula.
\newblock {\em Stat. Probab. Lett.}, 84:131--134, 2014.

\bibitem{ABJ_intertwining}
M.~Arnaudon, M.~Bonnefont, and A.~Joulin.
\newblock Intertwinings and generalized {Brascamp}-{Lieb} inequalities.
\newblock {\em Rev. Mat. Iberoam.}, 34(3):1021--1054, 2018.

\bibitem{BGL}
D.~Bakry, I.~Gentil, and M.~Ledoux.
\newblock {\em Analysis and Geometry of Markov Diffusion operators}, volume 348
  of {\em Grundlehren der mathematischen Wissenschaften}.
\newblock Springer, Heidelberg, 2013.

\bibitem{bobkov_spherically_symmetric}
S.~Bobkov.
\newblock {\em Spectral Gap and Concentration for Some Spherically Symmetric
  Probability Measures}, pages 37--43.
\newblock Springer Berlin Heidelberg, Berlin, Heidelberg, 2003.

\bibitem{bj}
M.~Bonnefont and A.~Joulin.
\newblock Intertwining relations for one-dimensional diffusions and application
  to functional inequalities.
\newblock {\em Potential Anal.}, 41(4):1005–1031, 2014.

\bibitem{bjm}
M.~Bonnefont, A.~Joulin, and Y.~Ma.
\newblock A note on spectral gap and weighted {P}oincar\'e inequalities for
  some one-dimensional diffusions.
\newblock {\em ESAIM Probab. Stat.}, 20:18--29, 2016.

\bibitem{joulin_bonnefont_spherically_symmetric}
M.~Bonnefont, A.~Joulin, and Y.~Ma.
\newblock Spectral gap for spherically symmetric log-concave probability
  measures, and beyond.
\newblock {\em J. Funct. Anal.}, 270(7):2456--2482, 2016.

\bibitem{brascamp_lieb}
H.~Brascamp and E.~Lieb.
\newblock On extensions of the brunn-minkowski and pr{\'e}kopa-leindler
  theorems, including inequalities for log concave functions, and with an
  application to the diffusion equation.
\newblock {\em J. Funct. Anal.}, 22(4):366--389, 1976.

\bibitem{group_independent_GSA}
B.~Broto, F.~Bachoc, M.~Depecker, and J.~Martinez.
\newblock Sensitivity indices for independent groups of variables.
\newblock {\em Math. Comput. Simulat.}, 163:19--31, 2019.

\bibitem{caffarelli}
L.~Caffarelli.
\newblock Monotonicity properties of optimal transportation and the {FKG} and
  related inequalities.
\newblock {\em Commun. Math. Phys.}, 214(3):547--563, 2000.

\bibitem{active_subspaces}
P.~Constantine, E.~Dow, and Q.~Wang.
\newblock Active subspace methods in theory and practice: applications to
  kriging surfaces.
\newblock {\em SIAM J. Sci. Comput.}, 36(4):a1500--a1524, 2014.

\bibitem{cui2024optimal}
T.~Cui, X.~Tong, and O.~Zahm.
\newblock Optimal {R}iemannian metric for {P}oincar{\'e} inequalities and how
  to ideally precondition {L}angevin dynamics.
\newblock {\em Preprint,
  \href{https://arxiv.org/abs/2404.02554}{arXiv:2404.02554}}, 2024.

\bibitem{meta_elliptical_distributions}
H.~Fang, K.~Fang, and S.~Kotz.
\newblock The meta-elliptical distributions with given marginals.
\newblock {\em J. Multivar. Anal.}, 94:1--16, 2002.

\bibitem{symmetric_multivariate_distributions}
K.~Fang, S.~Kotz, and K.~Ng.
\newblock {\em Symmetric multivariate and related distributions}, volume~36 of
  {\em Monogr. Stat. Appl. Probab.}
\newblock Chapman {and} Hall, 1990.

\bibitem{elliptical_distributions_in_statistics}
A.~Gupta, T.~Varga, and T.~Bodnar.
\newblock {\em Elliptically contoured models in statistics and portfolio
  theory}.
\newblock Springer, 2013.

\bibitem{multivariate_liouville_3}
R.~Gupta and D.~Richards.
\newblock Multivariate {Liouville} distributions, iii.
\newblock {\em J. Multivar. Anal.}, 43(1):29--57, 1992.

\bibitem{HerediaWeightPoincare}
D.~Heredia, A.~Joulin, and O.~Roustant.
\newblock On one dimensional weighted {P}oincar{\'e} inequalities for global
  sensitivity analysis.
\newblock {\em J. Math. Anal. Appl.}, 554(2), 2026.

\bibitem{huguet_cauchy}
B.~Huguet.
\newblock Poincar{\'e} inequalities and integrated curvature-dimension
  criterion for generalised {Cauchy} and convex measures.
\newblock {\em Bernoulli}, 30(3):2207--2227, 2024.

\bibitem{elliptical_distributions_dependence}
H.~Hult and F.~Lindskog.
\newblock Multivariate extremes, aggregation and dependence in elliptical
  distributions.
\newblock {\em Adv. Appl. Probab.}, 34(3):587--608, 2002.

\bibitem{R_sensitivity}
B.~Iooss, S.~{Da Veiga}, A.~Janon, and G.~Pujol.
\newblock sensitivity: Global sensitivity analysis of model outputs, 2023.
\newblock {R} package version 1.29.0.

\bibitem{copulas_harry}
H.~Joe.
\newblock {\em Dependence modeling with copulas}, volume 134 of {\em Monogr.
  Stat. Appl. Probab.}
\newblock CRC Press, 2014.

\bibitem{lamboni}
M.~Lamboni, B.~Iooss, A.~Popelin, and F.~Gamboa.
\newblock Derivative-based global sensitivity measures: general links with
  {Sobol} indices and numerical tests.
\newblock {\em Math. Comput. Simulat.}, 87:45--54, 2013.

\bibitem{stein_kernels_ley}
C.~Ley, G.~Reinert, and Y.~Swan.
\newblock Stein's method for comparison of univariate distributions.
\newblock {\em Probab. Surv.}, 14:1--52, 2017.

\bibitem{tau_elliptical_copulas}
F.~Lindskog, A.~McNeil, and U.~Schmock.
\newblock Kendall's tau for elliptical distributions.
\newblock In G.~Bol, G.~Nakhaeizadeh, S.~Rachev, T.~Ridder, and K.~Vollmer,
  editors, {\em Credit Risk}, pages 149--156, Heidelberg, 2003. Physica-Verlag
  HD.

\bibitem{robust_statistics}
R.~Maronna, R.~Martin, V.~Yohai, and M.~Salibi{\'a}n-Barrera.
\newblock {\em Robust statistics. {Theory} and methods (with {R})}.
\newblock John Wiley \& Sons, 2019.

\bibitem{inequalities_book}
A.~Marshall, I.~Olkin, and B.~Arnold.
\newblock {\em Inequalities: theory of majorization and its applications}.
\newblock Springer, 2011.

\bibitem{copula_simulation}
A.~McNeil.
\newblock Sampling nested {Archimedean} copulas.
\newblock {\em J. Stat. Comput. Simul.}, 78(5-6):567--581, 2008.

\bibitem{miclo_dirichlet}
L.~Miclo.
\newblock About projections of logarithmic {Sobolev} inequalities.
\newblock In {\em S\'eminaire de probabilit\'es XXXVI}, pages 201--221.
  Springer, 2003.

\bibitem{multivariate_pareto}
T.~Nayak.
\newblock Multivariate {Lomax} distribution: Properties and usefulness in
  reliability theory.
\newblock {\em J. Appl. Probab.}, 24:170--177, 1987.

\bibitem{Nelsen_copulas}
R.~Nelsen.
\newblock {\em An introduction to copulas.}
\newblock Springer, 2006.

\bibitem{dirichlet_book}
K.~Ng, G.~Tian, and M.~Tang.
\newblock {\em {Dirichlet} and related distributions: Theory, methods and
  applications}.
\newblock John Wiley \& Sons, 2011.

\bibitem{high-dimensional_matrix_estimation}
E.~Ollila, D.~Palomar, and F.~Pascal.
\newblock Shrinking the eigenvalues of {M}-estimators of covariance matrix.
\newblock {\em IEEE Trans. Signal Process.}, 69:256--269, 2021.

\bibitem{R}
{R Core Team}.
\newblock {R}: A language and environment for statistical computing, 2023.

\bibitem{poincareintervals}
O.~Roustant, F.~Barthe, and B.~Iooss.
\newblock Poincar\'e inequalities on intervals - application to sensitivity
  analysis.
\newblock {\em Electron. J. Stat.}, 11:3081--3119, 2017.

\bibitem{PoincareChaos}
O.~Roustant, F.~Gamboa, and B.~Iooss.
\newblock Parseval inequalities and lower bounds for variance-based sensitivity
  indices.
\newblock {\em Electron. J. Stat.}, 14:386--412, 2020.

\bibitem{gradient_enhanced}
O.~Roustant, N.~Lüthen, D.~Heredia, and B.~Sudret.
\newblock Gradient-enhanced global sensitivity analysis with {P}oincar{\'e}
  chaos expansions, 2025.

\bibitem{shimakura}
N.~Shimakura.
\newblock {\'E}quations diff{\'e}rentielles provenant de la g{\'e}n{\'e}tique
  des populations.
\newblock {\em Tohoku Math. J.}, 29(2):287 -- 318, 1977.

\bibitem{SobolKucherenko2009}
I.~Sobol and S.~Kucherenko.
\newblock Derivative-based global sensitivity measures and the link with global
  sensitivity indices.
\newblock {\em Math. Comput. Simulat.}, 79:3009--3017, 2009.

\bibitem{Song}
S.~Song, T.~Zhou, L.~Wang, S.~Kucherenko, and Z.~Lu.
\newblock Derivative-based new upper bound of {S}obol sensitivity measure.
\newblock {\em Reliab. Eng. Syst. Saf.}, 187:142--148, 2019.

\bibitem{steiner}
C.~Steiner.
\newblock {\em Sur l'utilisation des relations d'entrelacement dans l'{\'e}tude
  des g{\'e}n{\'e}rateurs de Markov auto-adjoints. : Application aux
  in{\'e}galit{\'e}s spectrales et fonctionnelles et {\`a} l'analyse de
  sensibilit{\'e}}.
\newblock Theses, {Universit{\'e} Paul Sabatier - Toulouse III}, 2022.

\bibitem{verdiere2025}
R.~Verdi{\`e}re, C.~Prieur, and O.~Zahm.
\newblock Diffeomorphism-based feature learning using {Poincar{\'e}}
  inequalities on augmented input space.
\newblock {\em J. Mach. Learn. Res.}, 26:31, 2025.

\bibitem{veysseire}
L.~Veysseire.
\newblock A harmonic mean bound for the spectral gap of the {Laplacian} on
  {Riemannian} manifolds.
\newblock {\em C. R. Math.}, 348(23):1319--1322, 2010.

\bibitem{zahm2019}
O.~Zahm, P.~Constantine, C.~Prieur, and Y.~Marzouk.
\newblock Gradient-based dimension reduction of multivariate vector-valued
  functions.
\newblock {\em SIAM J. Sci. Comput.}, 42(1):a534--a558, 2020.

\end{thebibliography}
	
	\bigskip
	\noindent
	(D. Heredia) UMR CNRS 5219,  \textsc{Institut de Math\'ematiques de
		Toulouse, Universit\'e de Toulouse, France}\par
	\textit{E-mail address:} \href{mailto:dheredia@insa-toulouse.fr}{mailto:dheredia(at)insa-toulouse.fr}\par
	\textit{URL:} \url{https://davidherediag.wordpress.com/}
	
\end{document}